\newcommand{\R}{{\mathbb{R}}}
\newcommand{\Z}{{\mathbb{Z}}}
\newcommand{\N}{{\mathbb{N}}}
\newcommand{\E}{\mathrm{E}}
\renewcommand{\P}{\mathrm{P}}
\renewcommand{\d}{\mathrm{d}}
\newcommand{\<}{\langle}
\renewcommand{\>}{\rangle}
\newcommand{\e}{\mathrm{e}}
\newcommand{\Var}{\text{\rm Var}}
\newcommand{\lip}{\text{\rm Lip}}
\DeclareMathOperator{\Cov}{\text{\rm Cov}}
\title{A CLT for dependent random variables, with an application to
	an infinite system of interacting diffusion processes\thanks{%
	Research supported in part by  NSF grants DMS-1811181 (D.N.) and DMS-1855439 (D.K.).}}
\author{Le Chen\\Emory University\\\texttt{le.chen@emory.edu}\\
	\and
		Davar Khoshnevisan\\University of Utah\\\texttt{davar@math.utah.edu}\\
	\and\and
		David Nualart\\University of Kansas\\\texttt{nualart@ku.edu}\\
	\and
		Fei Pu\\University of Utah\\\texttt{pu@math.utah.edu}
	}
\date{\today}%{May 7, 2020}{Dec. 4, 2019}
\begin{document}
\newtheorem{stat}{Statement}[section]
\newtheorem{proposition}[stat]{Proposition}
\newtheorem*{prop}{Proposition}
\newtheorem{corollary}[stat]{Corollary}
\newtheorem{theorem}[stat]{Theorem}
\newtheorem{lemma}[stat]{Lemma}
\theoremstyle{definition}
\newtheorem{definition}[stat]{Definition}
\newtheorem*{cremark}{Remark}
\newtheorem{remark}[stat]{Remark}
\newtheorem*{OP}{Open Problem}
\newtheorem{example}[stat]{Example}
\newtheorem{nota}[stat]{Notation}
\numberwithin{equation}{section}
\maketitle

\begin{abstract}
	We present a central limit theorem for stationary random fields that
	are short-range dependent and asymptotically independent. As an application,
	we present a central limit theorem for an infinite family of interacting It\^o-type diffusion
	processes.
\end{abstract}

\bigskip\bigskip

\noindent{\it \noindent MSC 2010 subject classification:}
Primary. 60F05; Secondary. 60H10, 60J60, 60K35.

\noindent{\it Keywords:}
Central limit theorems. Stationary processes.
Short-range dependence. Asymptotic independence. Interacting diffusions.

%\newpage
{
%  \hypersetup{linkcolor=black}
%\tableofcontents
}

\section{Introduction}
Let $\zeta=\{\zeta_k\}_{k\in\Z^d}$ be a stationary random field, centered and
normalized so that
$\E(\zeta_0)=0$ and $\Var(\zeta_0)< \infty$. The principal aim of this article is to: (1)
Prove the following central limit theorem for dependent random variables; and
(2) Present an application of this theorem to interacting diffusion processes.

\begin{theorem}\label{th:clt:Intro}
	If the random field $\zeta=\{\zeta_k\}_{k\in\Z^d}$ is short-range dependent $[$see
	\eqref{SRD}$]$,
	asymptotically independent $[$see Condition {\bf (AI)}$]$,
	and satisfies a minimal integrability condition $[$see \eqref{E:UI}$]$,
	then the distribution of $n^{-d/2}\sum_{k\in\{1,\ldots,n\}^d}\zeta_k$
	is asymptotically normal as $n\to\infty$.
\end{theorem}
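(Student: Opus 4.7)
My plan is to use the classical Bernstein big-blocks/small-blocks decomposition adapted to $\Z^d$. Partition the cube $\{1,\ldots,n\}^d$ into a grid of cubic big blocks of side length $p=p(n)$, separated by corridors of thickness $q=q(n)$, tuned so that $p\to\infty$, $q\to\infty$, and $q/p\to 0$. Boundary remainders can be absorbed into the corridor sum after a routine argument. Write $S_n=\sum_{k\in\{1,\ldots,n\}^d}\zeta_k = B_n + R_n$, where $B_n$ collects the contributions from the $M\asymp(n/(p+q))^d$ big blocks and $R_n$ those from the separating corridors.

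The first step is a variance calculation. Using short-range dependence, I would show $n^{-d}\Var(R_n)\to 0$, which reduces the CLT for $n^{-d/2}S_n$ to one for $n^{-d/2}B_n$. The same covariance summability yields the candidate limit variance $\sigma^2:=\sum_{k\in\Z^d}\Cov(\zeta_0,\zeta_k)$ and shows that $n^{-d}\Var(B_n)\to\sigma^2$. Stationarity makes the $M$ block-sums identically distributed, each with variance of order $p^d$.

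The second step is decoupling. The big blocks are pairwise separated by at least $q$ coordinates, a separation that tends to infinity. Condition \textbf{(AI)} should allow me to construct, on an enlarged probability space, an independent family of block-sums with the same marginals whose joint law differs from the true one by a vanishing amount (in total variation, or at the level of characteristic functions, depending on the precise form of \textbf{(AI)}). Once this independent surrogate is in hand, the classical Lindeberg--Feller theorem applies to the normalized sum of $M$ i.i.d.\ copies, provided a Lindeberg-type condition for the block-sums is verified; for this, finite variance of $\zeta_0$ together with an approximation by a truncation $\zeta_0\one_{\{|\zeta_0|\le K\}}$ and a diagonal argument in $K$ should suffice.

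The main obstacle, in my view, will be the joint tuning of $p$ and $q$ together with a careful use of \textbf{(AI)}. The corridor volume is of order $M\cdot q\cdot p^{d-1}\asymp n^d\cdot q/p$, which is $o(n^d)$ precisely when $q=o(p)$; on the other hand, the decoupling error accumulated across $O(M^2)$ pairs of big blocks at separation $q$ must be made $o(1)$, which imposes a competing lower bound on $q$ dictated by the quantitative rate built into \textbf{(AI)}. Balancing these two regimes, and ensuring that neither regime forces more than the stated second-moment and covariance-summability hypotheses, is where the technical work of the proof will concentrate.
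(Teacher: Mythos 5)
There is a genuine gap, and it lies exactly where you anticipate the ``technical work'' to be: the decoupling step cannot be carried out with the hypothesis (\textbf{AI}) as it is actually formulated. Condition (\textbf{AI}) asserts asymptotic independence of $S_n(\varphi_1)$ and $S_n(\varphi_2)$ only when the supports of $\varphi_1,\varphi_2$ are separated by a \emph{fixed} $\delta>0$ in the macroscopic variable $k/n$; in lattice coordinates this is a separation of order $\delta n$. Your big blocks are separated by corridors of width $q=q(n)$ with $q/n\to 0$ (forced by $q=o(p)$ and $p\le n$), so (\textbf{AI}) says nothing about them. Worse, (\textbf{AI}) is purely qualitative --- there is no rate --- so even if the blocks were macroscopically separated you could not sum a decoupling error over a number of blocks $M=M(n)\to\infty$: the hypothesis only controls a fixed finite family of test functions (Lemma \ref{lem:AI}), with no uniformity. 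The ``competing lower bound on $q$ dictated by the quantitative rate built into (\textbf{AI})'' that your balancing argument needs simply does not exist. A secondary issue: the Lindeberg condition for the surrogate block sums requires uniform integrability of the squared normalized block sums, and truncating $\zeta_0$ does not obviously preserve \eqref{SRD} (the covariances of the truncated field are not controlled by those of $\zeta$), so this step cannot be dispatched from the stated second-moment and covariance-summability hypotheses alone.

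The paper's proof is structured precisely to avoid both problems. It never decouples more than a fixed finite number of macroscopically separated boxes. For the one-parameter family $Y_n(r)=S_n(\mathbf{1}_{Q(r)})$ it uses \eqref{SRD} for $L^2$-boundedness and tightness, extracts subsequential limits $\bar Y$ by a diagonal argument, shows via (\textbf{AI}) --- applied to finitely many boxes at a fixed macroscopic separation $\delta$, and then letting $\delta\to0$ using the continuity of $\bar Y$ --- that $\bar Y$ has independent stationary increments, and identifies $\bar Y$ as Brownian motion by L\'evy's characterization rather than by Lindeberg--Feller; the variance is pinned down afterwards by the uniform integrability that convergence in distribution together with convergence of second moments provides for free. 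If you want to salvage a Bernstein-type argument you would have to strengthen (\textbf{AI}) to a quantitative mixing condition at lattice scale (for instance a strong mixing rate), at which point you would be reproving Ibragimov's theorem rather than the one stated here.
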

Short-range dependence and asymptotic independence will be recalled in due time. For now,
it suffices to say that these are both natural conditions and both arise abundantly in the literature on
time-series analysis (see for example Lahiri \cite{Lahiri} and its references).

Our main application of
Theorem \ref{th:clt:Intro} is a result about infinite systems of interacting diffusions.
Before we describe that application, let us make two brief remarks that
explain how Theorem \ref{th:clt:Intro} might relate to parts
of a vast body of  central limit theorems
that already exist  in the literature.

\begin{remark}
	One can prove that if $\zeta$ is strongly mixing in the sense
	of Rosenblatt \cite{Rosenblatt}, then $\zeta$ is asymptotically independent;
	see Corollary 1.12 of Bradley \cite[Vol.\ 1]{Bradley} for example when $d=1$. It
	follows from this that Theorem \ref{th:clt:Intro} implies a well-known central limit theorem of
	Ibragimov \cite{Ibragimov} for strongly mixing sequences though, usually, the latter is cast when
	$d=1$.  A noteworthy difference between the proof of Theorem \ref{th:clt:Intro} and the proofs of
	CLTs for strongly mixing sequences is that the proof of Theorem \ref{th:clt:Intro} is non technical,
	and relies only on compactness arguments together with Paul L\'evy's classical characterization of
	standard Brownian motion
	as the unique mean-zero, continuous L\'evy process with variance one at time one
	(this is in fact an immediate consequence of the L\'evy-Khintchine formula;
	see, for example Bertoin \cite{Bertoin}).
	The detailed bibliography of the three-volume book
	by Bradley \cite{Bradley} contains a very large number of
	pointers to the vast literature on CLT for stationary dependent sequences.
	See also the  survey articles by Bradley \cite{Bradley_PS}
	and Merlev\`ede, Peligrad, and Utev \cite{MerlevedePeligradUtev}.
\end{remark}

\begin{remark}
	It is  easy to see that if $\zeta$ is an associated sequence of random variables
	in the sense of Esary, Proschan, and Walkup \cite{EsaryProschanWalkup}, then Theorem
	\ref{th:clt:Intro} reduces to the central limit theorem of Newman and Wright
	\cite{NewmanWright} for associated random variables (again, usually stated
	for $d=1$).
\end{remark}

The preceding remarks describe how Theorem \ref{th:clt:Intro} reduces to well-known theorems
in specific settings. Next we describe a setting where we do not know whether there is
association or strong mixing.

Consider the following infinite system of interacting It\^{o}-type stochastic differential equations:
\begin{equation}\label{SHE}\begin{split}
	&\d u_t(x) =  (Lu_t)(x)\,\d t
		+ \Phi(u_t(x))\,\d B_t(x)\qquad\text{for all $t>0$ and $x\in\Z^d$},\\
	&\text{subject to }u_0(x)=1\qquad\text{for all $x\in\Z^d$},
\end{split}\end{equation}
for a field $B=\{B(x)\}_{x\in\Z^d}$ of i.i.d.\ one-dimensional Brownian motions.
Here, $L$ denotes the generator of a continuous-time random walk on $\Z^d$,
and the diffusion coefficient $\Phi: \R \mapsto \R$ is assumed to be Lipschitz continuous.
%\textcolor{magenta}{
%	Or equivalently, one can write \eqref{SHE} in the following form
%\begin{equation}\label{SHE2}
%	\d u_t(x) =\kappa \sum_{y\in\Z^d} p_y \left(u_t(x)-u_t(y)\right) \,\d t
%		+ \Phi(u_t(x))\,\d B_t(x)\qquad\text{for all $t>0$ and $x\in\Z^d$},
%\end{equation}
%subject to $u_0(x)\equiv 1$, where $\kappa>0$ is the rate parameter and $\{p_y\}_{y\in\Z^d}$ is
%probability density function --- the so-called {\it jump measure}; see, e.g., Example 4.4 of \cite{KhK15}.
%}
Shiga and Shimizu \cite{SS} have shown that \eqref{SHE} has a unique adapted solution
under these conditions.

\begin{theorem}\label{CLT:SHE}
	For every Lipschitz-continuous function $g$ and all $t \geq 0$,
	\begin{align}\label{CLT}
		\left\{ n^{-d/2}\sum_{x\in\Z^d}\left(g(u_t(x))- \E[g(u_t(0)]\right)
		\varphi(x/n);\,\varphi\in\mathscr{C}\right\}
		\xrightarrow{\text{\rm fdd}} \{\sigma_{g,t} 		W(\varphi);\,\varphi\in\mathscr{C}\},
	\end{align}
	where  $\sigma_{g,t}^2= \sum_{x\in\Z^d}\Cov[g(u_t(0))\,,g(u_t(x))]$ is finite, in fact absolutely convergent.
\end{theorem}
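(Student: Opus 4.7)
The plan is to deduce Theorem~\ref{CLT:SHE} from Theorem~\ref{th:clt:Intro} applied to the random field
\[
	\zeta_x := g(u_t(x)) - \E[g(u_t(0))],\qquad x\in\Z^d,
\]
and then to upgrade the resulting scalar CLT on cubes to the claimed finite-dimensional convergence indexed by $\varphi\in\mathscr{C}$. Stationarity and centering of $\zeta$ are immediate: since the initial condition is constant, $L$ is translation-invariant, and the field $\{B(x)\}_{x\in\Z^d}$ is i.i.d., the law of $\{u_t(x+y)\}_{y\in\Z^d}$ does not depend on $x$. Standard Picard/Gronwall estimates on the mild formulation
\[
	u_t(x) = 1 + \int_0^t\sum_{y\in\Z^d} p_{t-s}(y-x)\,\Phi(u_s(y))\,\d B_s(y),
\]
where $p_s$ is the transition kernel of the random walk generated by $L$, then give $\sup_{x}\E[|u_t(x)|^p]<\infty$ for every $p\geq1$, which in turn yields $\Var(\zeta_0)<\infty$ because $g$ is Lipschitz.

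The first real task is to verify short-range dependence, that is, the absolute summability of the covariances $\Cov[g(u_t(0)),g(u_t(x))]$ over $x\in\Z^d$. I would bound these via the Clark--Ocone formula: writing $D_{s,y}$ for the Malliavin derivative with respect to $B(y)$, the chain rule gives $D_{s,y}g(u_t(x))=g'(u_t(x))\,D_{s,y}u_t(x)$ (with $|g'|\leq\lip(g)$ by Rademacher's theorem), and the Malliavin derivative $D_{s,y}u_t(x)$ solves a linear SPDE whose $L^2$ norm is of order $p_{t-s}(y-x)$ up to bounded multiplicative factors. Cauchy--Schwarz together with the semigroup identity $\sum_{y}p_r(y-x)p_r(y)=p_{2r}(x)$ produces a bound of the shape
\[
	\bigl|\Cov[g(u_t(0)),g(u_t(x))]\bigr| \;\leq\; C\int_0^t p_{2(t-s)}(x)\,\d s,
\]
whose sum over $x\in\Z^d$ is $Ct<\infty$. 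This simultaneously verifies the short-range dependence hypothesis and gives the absolute convergence of $\sigma_{g,t}^2$ claimed in the statement.

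For asymptotic independence I would use a finite-range truncation. Let $u_t^R(x)$ denote the solution of \eqref{SHE} in which every occurrence of $B(y)$ is replaced by $\one_{\{\|y-x\|\leq R\}}B(y)$; the same Picard iteration shows $\|u_t(x)-u_t^R(x)\|_{L^2}\to 0$ uniformly in $x$ as $R\to\infty$. By construction $\{u_t^R(x):x\in A\}$ and $\{u_t^R(x):x\in A'\}$ are driven by disjoint families of Brownian motions, and hence independent, whenever the $R$-fattenings of $A$ and $A'$ are disjoint. This uniform approximation by finite-range independent fields should yield condition \textbf{(AI)} for $\zeta$, and it is the main obstacle in the proof: everything hinges on a quantitatively sharp control of the truncation error, which is delicate because the random-walk generator $L$ propagates dependence through arbitrarily many sites in finite time.

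Having verified short-range dependence and asymptotic independence, Theorem~\ref{th:clt:Intro} delivers the CLT for $n^{-d/2}\sum_{k\in\{1,\ldots,n\}^d}\zeta_k$ with the limiting variance $\sigma_{g,t}^2$. To extend this to the finite-dimensional convergence \eqref{CLT}, I would invoke the Cram\'er--Wold device to reduce to a single weighted sum $S_n(\varphi):=n^{-d/2}\sum_{x\in\Z^d}\zeta_x\varphi(x/n)$ for a fixed $\varphi\in\mathscr{C}$. Approximating $\varphi$ in sup-norm by step functions that are constant on mesoscopic cubes of side $\ell_n$ with $\ell_n\to\infty$ and $\ell_n/n\to 0$ reduces $S_n(\varphi)$ to a linear combination of block sums that falls within the scope of Theorem~\ref{th:clt:Intro} (applied to the stationary field of block sums, which inherits SRD and AI from $\zeta$), the approximation error being controlled by the Lipschitz regularity of $\varphi$ and the covariance bound above. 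The limit is Gaussian with variance $\sigma_{g,t}^2\int\varphi^2$, and the joint Gaussianity across several test functions follows from the linearity of white noise in $\varphi$.
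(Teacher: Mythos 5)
Your overall skeleton matches the paper's: set $\zeta_x:=g(u_t(x))-\E[g(u_t(0))]$, check stationarity, short-range dependence and asymptotic independence, and feed the field into the general CLT of Section~\ref{S:CLT}. Your verification of short-range dependence is essentially the paper's argument (the paper runs it through the Poincar\'e inequality \eqref{Poincare:Cov} rather than Clark--Ocone, but the content --- chain rule, the bound $\|D_{s,y}u_t(x)\|_2\lesssim \bm{p}_{t-s}(x-y)$, and summation over $x$ --- is the same). Be aware, though, that this pointwise Malliavin-derivative bound is itself the technical heart of that half of the proof: closing the Picard/Gronwall iteration with a bound proportional to $[\bm{p}_{t-s}(x-y)]^2$, rather than something that spreads out at each iteration, requires the sub-semigroup property $\sum_y[\bm{p}_t(x-y)\bm{p}_s(y-z)]^2\le[\bm{p}_{t+s}(x-z)]^2$ (Lemma~\ref{lem:key}), which you assert but do not supply.

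The genuine gap is in your verification of (\textbf{AI}), and you correctly identify it as the main obstacle without resolving it. First, the object $u_t^R(x)$ obtained by ``replacing every occurrence of $B(y)$ by $\one_{\{\|y-x\|\le R\}}B(y)$'' is not well defined as a single coupled system, since the truncation center varies with $x$; and even with a globally truncated kernel, each Picard iterate propagates dependence one further shell, so the limit is not measurable with respect to finitely many coordinates. More seriously, even granting a uniform approximation $\epsilon(R):=\sup_x\|u_t(x)-u_t^R(x)\|_2\to0$, the triangle inequality only gives $\|S_n(\varphi)-S_n^R(\varphi)\|_2\le \lip_g\,\epsilon(R)\,n^{-d/2}\sum_k|\varphi(k/n)|\asymp n^{d/2}\epsilon(R)$, which diverges for fixed $R$ as $n\to\infty$; to make the truncation work you would need summable covariances of the \emph{difference} field with small total mass, a new quantitative estimate that the uniform $L^2$ bound does not provide. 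The paper sidesteps all of this by applying the Poincar\'e inequality \eqref{Poincare:Cov} directly to $F=\e^{iaS_n(\varphi_1)}$ and $G=\e^{-ibS_n(\varphi_2)}$: the resulting bound $|\Cov(F,G)|\le|ab|\sum_z\int_0^t\|D_{s,z}S_n(\varphi_1)\|_2\,\|D_{s,z}S_n(\varphi_2)\|_2\,\d s$ reduces, via the kernel bound on $Du_t$, to a constant times $\sum_{\ell}\int_0^t\P\{X_s-X_s'=\ell\}\,\d s\; n^{-d}\sum_m|\varphi_1((m+\ell)/n)\varphi_2(m/n)|$, which tends to $t\int_{\R^d}|\varphi_1\varphi_2|=0$ by disjointness of supports. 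This establishes (\textbf{AI}) for the weighted sums $S_n(\varphi_i)$ themselves, so Theorem~\ref{th:CLT} delivers the full finite-dimensional statement at once; your final mesoscopic-block upgrade is then unnecessary, and as written it is also problematic (it invokes Lipschitz regularity of $\varphi$, which elements of $\mathscr{C}$ need not possess, and asserts without proof that the block field inherits SRD and AI).
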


In the above theorem, and throughout the paper,
``$\xrightarrow{\text{\rm fdd}}$'' refers to the weak convergence of all finite-dimensional distributions.
Moreover $\mathscr{C}$ denotes the class of all
piecewise-continuous functions with compact support on $\mathbb{R}^d$.\footnote{Piecewise continuous
	means functions which are continuous except on a finite number of  hyperplanes.}

Earlier, Deuschel \cite{Deuschel} proved a version of Theorem \ref{CLT:SHE} --
where $L$ is replaced by a more general
nonlinear operator and $\Phi$ a less general function --
using a martingale central limit theorem
in place of Theorem \ref{th:clt:Intro}. The present formulation of Theorem \ref{CLT:SHE}
includes the important special
case of the ``parabolic Anderson model.'' That is when $\Phi(z)=\text{const}\times z$ for all $z\in\R$;
see Carmona and Molchanov \cite{CM}.

Theorem \ref{CLT:SHE}
can also be  generalized to cover many other noise models; see
Ref.\ \cite{CKNP,CKNP_b,CKNP_c} for analogous results
in the continuous setting of SPDEs. Here, we will not study such more general results to maintain brevity.

The remainder of this paper is devoted to the proof of
Theorems \ref{th:clt:Intro} and \ref{CLT:SHE}, which are proved in Sections \ref{S:CLT} and
\ref{S:SPDE}, respectively.

We close the Introduction with a brief description of the notation of this paper.
Throughout we write $\|Z\|_k$ instead of
$(\E[|Z|^k])^{1/k}$. We denote
by $\lip_f$ the Lipschitz constant of every function  $f:\R\to\mathbb{C}$; that is,
\[
	\lip_f := \sup_{-\infty<a<b<\infty}
	\frac{|f(b)-f(a)|}{|b-a|}.
\]
Rademacher's theorem (see Federer \cite[Theorem 3.1.6]{Federer})
ensures that if $\lip_f<\infty$, then the weak derivative
of $f$ exists and is bounded almost everywhere in modulus by $\lip_f$.

\section{A CLT for dependent variables}\label{S:CLT}
In this section we first recall the undefined
terminology of Theorem \ref{th:clt:Intro}, and then state and prove a more general
theorem   (Theorem \ref{th:CLT}) than Theorem \ref{th:clt:Intro} which turns out to be easier to
prove directly.

\subsection{Two definitions}\label{sec:Def}

We start with a general definition.

\begin{definition} \label{def1}
A sequence of $N$-dimensional  random vectors $(X_{1,n}, \dots, X_{N,n})$, $N\ge 2$, is {\it asymptotically independent}
 as $n\rightarrow \infty$ if for all   real numbers $\xi_j$, $1\le j\le N$,
\[
		\lim_{n\to\infty}\left |  \E\left[ \prod_{j=1}^n \e^{i\xi_jX_{j,n} }\right] -
		\prod_{j=1}^N\E\left[\e^{i\xi_j X_{j,n}}\right]\right|
		= 0.
\]
\end{definition}

Recall that a stationary random field $\zeta=\{\zeta_k\}_{k\in\Z^d}$ is said to be \emph{short-range
dependent} when
\begin{equation}\label{SRD}
	\bar{\sigma}^2:= \sum_{k\in\Z^d} |\Cov(\zeta_0\,,\zeta_k)|<\infty.
\end{equation}

It will be helpful to first introduce some notation before we
define the remaining undefined term in Theorem \ref{th:clt:Intro}.
We endow the collection
$\mathscr{C}$ of all piecewise-continuous functions
$\varphi:\R^d\to\R$ that have compact support with
the $L^2(\R^d)$ norm.
Let  $\mathscr{U} \subset \mathscr{C}$ denote the
collection of all linear combinations of indicator functions of
upright boxes
$Q\subset\R^d$ of the form $Q = (a_1\,,b_1] \times\cdots\times (a_d\,,b_d],$
where $a_1<b_1,\ldots,a_d<b_d$ are real numbers.
We always let $\text{\rm supp}[\varphi]$  denote the support of
the function $\varphi\in\mathscr{C}$.
Moreover, the \emph{separation} of two  sets $A$ and $B$ is defined as
\begin{align}\label{sep}
	{\rm sep}(A\,, B)= \inf
	\left\{\max_{1\le i\le d}|x_i- y_i|: x \in A,\,  y \in B\right\}
	\qquad \text{for $A,B \subset \R^d$}.
\end{align}

For every $\varphi\in\mathscr{C}$ we consider the sequence of random variables
\begin{equation}\label{S_n}
	S_n(\varphi) :=  n^{-d/2} \sum_{k\in\Z^d} \zeta_k\, \varphi(k/n)
	\qquad[n\in\mathbb{N}].
\end{equation}
Now we are ready to introduce the assumption of  asymptotic independence.

\begin{itemize}
\item [(\textbf{AI})]
	For every $\delta >0$ and  all $\varphi_1, \varphi_2 \in \mathscr{U}$ such that
	${\rm sep}({\rm supp}[\varphi_1]\,,{\rm supp}[\varphi_2]) \geq \delta,$
	the random variables
	$S_n(\varphi_1)$ and $S_n(\varphi_2)$ are {\em asymptotically independent} as $n\to\infty$;
	that is,
	\begin{equation}\label{AI}
		\lim_{n\to\infty}\left|\E\left[\e^{iaS_n(\varphi_1) + ibS_n(\varphi_2)}\right] -
		\E\left[\e^{iaS_n(\varphi_1)}\right]\E\left[\e^{ibS_n(\varphi_2)}\right]\right|
		= 0\qquad \text{for all $a, b \in \R$}.
	\end{equation}
\end{itemize}

We conclude this section with a CLT for dependent variables.
Let $W=\{W(\varphi);\, \varphi\in L^2(\R^d)\}$ denote the
usual isonormal Gaussian process that is associated with white noise on $\R^d$. That is,
$W$ is a mean-zero Gaussian process with
\[
	\Cov[W(\varphi_1)\,,W(\varphi_2)]=\<\varphi_1\,,\varphi_2\>_{L^2(\R^d)}
	\qquad\text{for every $\varphi_1,\varphi_2\in L^2(\R^d)$}.
\]

\begin{theorem}\label{th:CLT}
	If $\zeta=\{\zeta_k\}_{k\in\Z^d}$ is a stationary random field that satisfies \eqref{SRD}and
	\emph{(\textbf{AI})}, and if
	\begin{align} \label{E:UI}
		n\mapsto S^2_n(\bm{1}_{(0,1]^d}) 
		= n^{-d}\bigg( \sum_{k\in\{1,\ldots,n\}^d}\zeta_k\bigg)^2
		\text{is uniformly integrable},
	\end{align}
	then
	$\{S_n(\varphi);\, \varphi\in\mathscr{C}\}
	\xrightarrow{\text{\rm fdd}}\{\sigma W(\varphi);\,\varphi\in\mathscr{C}\}$
	as $n\to\infty$, where
	\begin{equation}\label{sigma}
		\sigma^2 := \sum_{k\in\Z^d}\Cov(\zeta_0\,,\zeta_k).
%		\footnote{%
%			It is generally the case that $\sigma^2\ge0$ whenever $\bar\sigma^2<\infty$;
%			this motivates the supscripts of $2$ in $\sigma^2$; see
%			\eqref{sigma}.
%			One justifies this general, and well known, fact as follows:
%			\[
%				\Var ( \sum_{k\in \{0,\ldots,l\}^d} X_k)
%				= (1+l)^d\sum_{m\in\{-l,\ldots,l\}^d}
%				\Cov(X_0,X_m),
%			\]
%			whence $\sum_{m\in\{-l,\ldots,l\}^d}\Cov(X_0\,,X_m)\ge0$
%			for all $l\in\N$. Let $l\to\infty$ to finish using the fact that $\bar\sigma^2<\infty$.
%		}
	\end{equation}
\end{theorem}

Let   $\varphi = \bm{1}_{(0, 1]^d}$
in order to
deduce the following precise form of Theorem \ref{th:clt:Intro} from Theorem \ref{th:CLT}.

\begin{corollary}
	If $\zeta=\{\zeta_k\}_{k\in\Z^d}$ is a stationary random field that
	satisfies \eqref{SRD}, \emph{(\textbf{AI})}, and \eqref{E:UI}, then $n^{-d/2} \sum_{k\in\{1,\ldots,n\}^d} \zeta_k
	\xrightarrow{\text{\rm d\!}} {\rm N}(0\,, \sigma^2)$
	as $n \to \infty$, where $\sigma^2$ is defined in \eqref{sigma}.
\end{corollary}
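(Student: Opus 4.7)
The plan is to apply Theorem \ref{th:CLT} to the single test function $\varphi := \bm{1}_{(0,1]^d}$, as the preceding sentence of the paper already hints. First, I would verify that $\varphi$ lies in the ambient class: it is the indicator of an upright box, so $\varphi \in \mathscr{U} \subset \mathscr{C}$ and Theorem \ref{th:CLT} is directly applicable. Convergence in distribution of the real-valued variable $S_n(\varphi)$ is of course a special case of the finite-dimensional convergence provided by that theorem (one-point fdd's are the same as ordinary weak convergence).

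Next I would identify both sides of the resulting convergence explicitly. On the prelimit side, the definition \eqref{S_n} gives
\[
S_n(\varphi) = n^{-d/2} \sum_{k\in\Z^d} \zeta_k\, \bm{1}_{(0,1]^d}(k/n),
\]
and the indicator $\bm{1}_{(0,1]^d}(k/n)$ equals $1$ precisely when $0 < k_i \le n$ for each coordinate $i \in \{1,\ldots,d\}$, i.e., when $k \in \{1,\ldots,n\}^d$. Therefore $S_n(\varphi)$ is exactly the normalized partial sum in the statement of the corollary. On the limit side, $W(\varphi)$ is centered Gaussian with variance $\|\varphi\|_{L^2(\R^d)}^2$, which here equals $1$ since $\varphi$ is the indicator of a unit cube; hence $\sigma W(\varphi) \sim {\rm N}(0,\sigma^2)$ with $\sigma^2$ as in \eqref{sigma}.

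There is essentially no technical obstacle here: the corollary is a direct specialization of Theorem \ref{th:CLT} to a single, conveniently chosen indicator test function, and the work reduces to the two elementary bookkeeping verifications that $\varphi \in \mathscr{U}$ and $\|\varphi\|_{L^2(\R^d)} = 1$, together with the observation that the grid points $k/n$ picked out by the support of $\varphi$ are exactly the lattice cube $\{1,\ldots,n\}^d$.
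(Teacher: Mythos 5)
Your proposal is correct and is precisely the argument the paper intends: the corollary is obtained by specializing Theorem \ref{th:CLT} to the single test function $\varphi=\bm{1}_{(0,1]^d}$, observing that $S_n(\varphi)$ is the stated normalized partial sum and that $\sigma W(\varphi)\sim\mathrm{N}(0,\sigma^2)$ because $\|\varphi\|_{L^2(\R^d)}=1$. Nothing further is needed.
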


\begin{lemma}\label{UIequiv}
	Condition \eqref{E:UI} implies that 
	$n\mapsto S^2_n(\bm{1}_Q) $ is uniformly integrable for every 
	$Q\in\mathscr{U}$.
\end{lemma}

\begin{proof}
	A finite sum of uniformly integrable (denoted by UI) random variables is UI. Therefore,
	by stationarity, it suffices to prove that $n\mapsto S^2_n(\bm{1}_Q)$ is 
	UI for $Q= \prod_{k=1}^d (0\,, r_k]$, where $0<r_k\leq 1$. 
	We will prove this for $Q= (0\,, r]\times (0\,, 1]^{d-1}$ since this is all we will need later on;
	the same arguments can be applied to prove  the uniform integrability for 
	more general $Q\in\mathscr{U}$. 
	To simplify the notation, we assume without loss of much generality, that $d=2$.

	Choose and fix some $r\in(0\,,1)$, and define $Q(\alpha) := (0\,, \alpha]\times (0\,,1]$ for 
	every $\alpha\in(0\,,1]$.
	One can check from first principals that if 
	$\{S^2_{n}(\bm{1}_{Q(\alpha)})\}_{n\in \N}$ is UI for some $\alpha\in(0\,,1]$, 
	then $\{S^2_{n}(\bm{1}_{\frac{1}{2}Q(\alpha)})\}_{n\in \N}$ is UI also. 
	Hence, it follows from  condition \eqref{E:UI} and stationarity that 
	$n\mapsto \{S^2_{n}(\bm{1}_{(0, 1/2] + (0, 1/2]^2})\}_{n\in \N}$ is UI.
	And because $Q(1/2)$ is the disjoint union of $(0\,, 1/2]^2$ and
	$(0\,, 1/2)+(0\,, 1/2]^2$, it follows that $n\mapsto S^2_n(\bm{1}_{Q(1/2)})$ is UI. 
	By induction, we may deduce that  $n\mapsto S^2_n(\bm{1}_{Q(1/2^k)})$ is UI for any $k\in \N$. 
	Again, we use the fact that a finite sum of UI random variables is UI to see that
	$n\mapsto S^2_n(\bm{1}_{Q(q)})$ is UI for every real number $q\in (0\,, 1)$ 
	of finite dyadic expansion; that is, $q$ of the form $\sum_{i=1}^{m}x_i/2^i$
	where $x_1,\ldots,x_m\in \{0\,, 1\}$.

	For every $\varepsilon >0$ there exists a  number 
	$q\in (0\,, 1)$, with a finite dyadic expansion, such that $\|\bm{1}_{Q(|r-q|)}\|_{L^2(\R^2)}<\varepsilon$. 
	Thus, the following is valid for all $K,L>0$ and $n\in\N$:
	\begin{align*}
		&\E\left[S^2_{n}(\bm{1}_{Q(r)})~;~|S_n(\bm{1}_{Q(r)})| >L \right]
			= \E\left[ \left| S_{n}(\bm{1}_{Q(q)}) + S_{n}(\bm{1}_{Q(r)} - \bm{1}_{Q(q)})
			\right| ^2~;~ |S_n(\bm{1}_{Q(r)}) |>L \right]\\
		&\quad \leq 2\E\left [S^2_{n}(\bm{1}_{Q(q)})~;~
			|S_n(\bm{1}_{Q(r)}) |>L \right] +  2\E\left[ S^2_{n}(\bm{1}_{Q(|r-q|)}) \right] \\
		&\quad \leq 2\E\left[ S^2_{n}(\bm{1}_{Q(q)})
			~;~ |S_n(\bm{1}_{Q(q)})| >K \right] + 
			\frac{2K^2}{L^2} \sup_{n\in \N}\E\left[S^2_{n}(\bm{1}_{Q(r)})
			\right] +  2\E\left[ S^2_{n}(\bm{1}_{Q(|r-q|)})\right],
	\end{align*}
	thanks to Chebyshev's inequality.  We borrow in advance from Lemma \ref{variance:limit}
	below -- see \eqref{UI} -- to see that\footnote{Lemma \ref{variance:limit} and its proof
	do not refer to the present lemma. So this application of Lemma \ref{variance:limit} is
	logically sound.}
	\begin{align*}
		\adjustlimits
		\lim_{L\to\infty}\limsup_{n\to\infty}
		\E \left[ S^2_{n}(\bm{1}_{Q(r)})~;~
		|S_n(\bm{1}_{Q(r)})| >L \right] \leq 2\limsup_{n\to\infty}
		\E\left[S^2_{n}(\bm{1}_{Q(q)}) ~;~ |S_n(\bm{1}_{Q(q)})| >K \right] 
		+2\sigma^2\varepsilon^2
	\end{align*}
	Let $K\to \infty$, using the fact that $n\mapsto S_n^2(\bm{1}_{Q(q)})$ is UI,
	to conclude the uniform integrability of $n\mapsto S^2_n(\bm{1}_{Q(r)})$ 
	from the fact that $\varepsilon$ is arbitrary. 
\end{proof}

\subsection{Tightness and weak convergence}
Before we prove Theorem \ref{th:CLT}, we make some comments, by way of three lemmas, about
tightness and weak convergence. The proof of Theorem \ref{th:CLT} will be carried out in the
next subsection.

\begin{lemma}\label{variance:limit}
	For every $\varphi\in\mathscr{C}$ and
	all $n\in\mathbb{N}$,
	\begin{equation}\label{UI}
   		\E\left(|S_n(\varphi)|^2\right) \leq \bar{\sigma}^2 n^{-d} \sum_{k\in\Z^d}
		|\varphi(k/n)|^2
		\quad\text{and}\quad
		\lim_{n\to\infty}\E\left(|S_n(\varphi)|^2\right) =  \sigma^2\|\varphi\|_{L^2(\R^d)}^2,
	\end{equation}
	where $\sigma$ and $\bar{\sigma}$
	are defined respectively in \eqref{sigma} and \eqref{SRD}.
\end{lemma}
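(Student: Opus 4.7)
The plan is to expand $\E(|S_n(\varphi)|^2)$ directly from the definition of $S_n(\varphi)$, reorganize the double sum by a change of variable based on stationarity, and then handle the two claims separately.

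Concretely, by stationarity,
\[
	\E\left(|S_n(\varphi)|^2\right) = n^{-d}\sum_{j,k\in\Z^d}\Cov(\zeta_j\,,\zeta_k)\,\varphi(j/n)\varphi(k/n)
	= n^{-d}\sum_{\ell\in\Z^d}\Cov(\zeta_0\,,\zeta_\ell)\,T_n(\ell),
\]
where $T_n(\ell) := \sum_{j\in\Z^d}\varphi(j/n)\varphi((j+\ell)/n)$. For the first (upper bound) claim, I would apply the pointwise inequality $|\varphi(j/n)\varphi((j+\ell)/n)|\le\tfrac12(|\varphi(j/n)|^2+|\varphi((j+\ell)/n)|^2)$ and observe that each of the two resulting sums over $j$ equals $\sum_{j\in\Z^d}|\varphi(j/n)|^2$ (the second after a translation of the index). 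Summing the resulting bound against $|\Cov(\zeta_0\,,\zeta_\ell)|$ over $\ell$ produces the factor $\bar\sigma^2$ and gives the claimed inequality.

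For the limit claim, I would treat $\ell$ as fixed and note that $n^{-d}T_n(\ell)$ is a Riemann-type sum. Since $\varphi$ is piecewise continuous and has compact support, $\varphi\in L^2(\R^d)$, and for fixed $\ell$ the shift $\ell/n\to 0$; thus $n^{-d}T_n(\ell)\to\|\varphi\|_{L^2(\R^d)}^2$ as $n\to\infty$. The only technical point is that piecewise continuous functions have only Lebesgue-null discontinuity sets (finite union of hyperplanes), so the usual Riemann-sum argument still goes through. The uniform bound $n^{-d}|T_n(\ell)|\le n^{-d}\sum_j|\varphi(j/n)|^2$, which is itself bounded in $n$ (it converges to $\|\varphi\|_{L^2(\R^d)}^2$), provides a dominating sequence $|\Cov(\zeta_0\,,\zeta_\ell)|\cdot C$ that is summable in $\ell$ by the short-range dependence hypothesis~\eqref{SRD}.

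Applying the dominated convergence theorem to the sum over $\ell$ then yields
\[
	\lim_{n\to\infty}\E\left(|S_n(\varphi)|^2\right) = \sum_{\ell\in\Z^d}\Cov(\zeta_0\,,\zeta_\ell)\,\|\varphi\|_{L^2(\R^d)}^2 = \sigma^2\|\varphi\|_{L^2(\R^d)}^2.
\]
I don't expect any serious obstacle here; the main thing to be careful about is the Riemann-sum convergence for merely piecewise continuous $\varphi$, but that is standard, and the uniform domination needed for the interchange of limit and sum comes for free from the bound established in the first part.
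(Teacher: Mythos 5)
Your proposal is correct and follows essentially the same route as the paper: expand the second moment, reindex by stationarity, bound the inner sum by $\sum_j|\varphi(j/n)|^2$ (you via the elementary inequality $|ab|\le\tfrac12(a^2+b^2)$ plus translation invariance, the paper via Cauchy--Schwarz for counting measure — the same estimate), and then pass to the limit by a Riemann-sum argument combined with dominated convergence over $\ell$ using \eqref{SRD}. No gaps.
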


\begin{proof}
	A change of variables shows that
	\[
		\E\left(|S_n(\varphi)|^2\right) = n^{-d} \sum_{j\in\Z^d}
		\Cov(\zeta_0\,,\zeta_j)\sum_{k\in\Z^d}
		\varphi(k/n)\varphi((j+k)/n).
	\]
	This implies the first assertion of \eqref{UI}, since the Cauchy--Schwarz inequality
	for the  counting measure implies that
	$n^{-d}\sum_{k\in\Z^d}|\varphi(k/n)\varphi((j+k)/n)|
	\leq n^{-d}\sum_{k\in\Z^d} |\varphi(k/n)|^2$.
	Moreover, since $\varphi$ is piecewise continuous with compact support,
	\begin{align}\label{Riemann}
		\lim_{n\to\infty} n^{-d}\sum_{k\in\Z^d}
		\varphi(k/n)\varphi((j+k)/n) = \|\varphi\|_{L^2(\R^d)}^2
		\qquad\text{boundedly, for every $j\in\Z^d$}.
	\end{align}
	Therefore, the remaining assertion of \eqref{UI} follows from the dominated convergence theorem.
\end{proof}

\begin{lemma}\label{lem:dense}
	Let $\mathscr{D}$ be a dense subset of $\mathscr{C}$  in $L^2(\mathbb{R}^d)$,
	and suppose $S_n(\psi)\xrightarrow{\text{\rm d\,}} \sigma W(\psi)$, as $n\to\infty$,
	for every $\psi\in\mathscr{D}$.
	Then, $\{S_n(\varphi);\, \varphi\in\mathscr{C}\}\xrightarrow{\text{\rm fdd\,}}
	\{\sigma W(\varphi);\,\varphi\in\mathscr{C}\}$
	as $n\to\infty$.
\end{lemma}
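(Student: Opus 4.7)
The plan is to reduce fdd convergence on $\mathscr{C}$ to one-dimensional convergence on $\mathscr{C}$ by a Cram\'er--Wold argument, and then to reduce one-dimensional convergence on $\mathscr{C}$ to the hypothesized one-dimensional convergence on $\mathscr{D}$ by an $L^2(\R^d)$-approximation powered by Lemma \ref{variance:limit}.

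I would dispose of the Cram\'er--Wold step first, as it is essentially free. Since both $\varphi\mapsto S_n(\varphi)$ and $\varphi\mapsto W(\varphi)$ are linear, for every choice of $\varphi_1,\ldots,\varphi_k\in\mathscr{C}$ and $a_1,\ldots,a_k\in\R$ one has
\[
	\sum_{j=1}^k a_j\,S_n(\varphi_j)=S_n\!\left(\sum_{j=1}^k a_j\varphi_j\right)
	\quad\text{and}\quad
	\sum_{j=1}^k a_j\,\sigma W(\varphi_j)=\sigma W\!\left(\sum_{j=1}^k a_j\varphi_j\right).
\]
Since $\mathscr{C}$ is closed under linear combinations (the sum of two piecewise-continuous functions with compact support is again piecewise continuous with compact support), once the one-dimensional convergence $S_n(\varphi)\xrightarrow{\text{\rm d}}\sigma W(\varphi)$ is known for every $\varphi\in\mathscr{C}$, fdd convergence follows at once from the Cram\'er--Wold device.

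The real work is therefore the second reduction. Fix $\varphi\in\mathscr{C}$ and pick $\psi_m\in\mathscr{D}$ with $\|\psi_m-\varphi\|_{L^2(\R^d)}\to 0$. By linearity, $S_n(\varphi)=S_n(\psi_m)+S_n(\varphi-\psi_m)$, and $\varphi-\psi_m\in\mathscr{C}$. Lemma \ref{variance:limit} applied to $\varphi-\psi_m$ yields $\lim_{n\to\infty}\E|S_n(\varphi-\psi_m)|^2=\sigma^2\|\varphi-\psi_m\|_{L^2(\R^d)}^2$, and the defining isometry of $W$ gives $\E|W(\varphi)-W(\psi_m)|^2=\|\varphi-\psi_m\|_{L^2(\R^d)}^2$. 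Using the elementary inequality $|\e^{iaX}-\e^{iaY}|\le|a|\,|X-Y|$ together with Cauchy--Schwarz, one bounds $|\E[\e^{iaS_n(\varphi)}]-\E[\e^{ia\sigma W(\varphi)}]|$ by the sum of two $L^2$-approximation errors---which tend to zero as $m\to\infty$, uniformly in all sufficiently large $n$---and the hypothesis term $|\E[\e^{iaS_n(\psi_m)}]-\E[\e^{ia\sigma W(\psi_m)}]|$, which tends to zero as $n\to\infty$ for each fixed $m$. Letting $n\to\infty$ first and then $m\to\infty$ delivers one-dimensional convergence on all of $\mathscr{C}$.

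The only point that requires care is controlling the approximation error $S_n(\varphi-\psi_m)$ \emph{uniformly in large $n$}, which is precisely the role of the variance bound in Lemma \ref{variance:limit}---and plausibly the reason that lemma is proved immediately beforehand. Once that bound is in hand, the remainder is a routine order-of-limits manipulation combined with the Cram\'er--Wold device.
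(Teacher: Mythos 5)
Your proposal is correct and follows essentially the same route as the paper: a Cram\'er--Wold reduction via the linearity of $S_n$ and $W$, followed by a characteristic-function comparison using the bound $|\e^{iz}-\e^{iy}|\le|z-y|$ and the variance estimate of Lemma \ref{variance:limit} applied to $\varphi-\psi\in\mathscr{C}$. The only cosmetic difference is that you approximate with a sequence $\psi_m$ and interchange limits, whereas the paper fixes a single $\psi$ per $\varepsilon$ and works with a $\limsup$ in $n$; the substance is identical.
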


\begin{proof}
	Choose and fix an arbitrary function $\varphi\in\mathscr{C}$. We aim to prove
	that $S_n(\varphi)\xrightarrow{\text{\rm d\,}} W(\varphi)$ as $n\to\infty$.
	The convergence of finite-dimensional distributions follows from this and the linearity of
	$\varphi\mapsto S_n(\varphi)$ and $\varphi\mapsto W(\varphi)$.
	For every $\varepsilon>0$ there exists $\psi\in\mathscr{D}$ such that
	$\|\varphi-\psi\|_{L^2(\R^d)}^2<\varepsilon$.
	Since $S_n(\psi)\xrightarrow{\rm d\,}\sigma W(\psi)$ as $n\to\infty$,
	we can write, for all $\xi \in \mathbb{R}$,
	\begin{align*}
		\limsup_{n\to\infty}\left| \E\e^{i\xi S_n(\varphi)} -
			\E\e^{i\xi \sigma W(\varphi)}\right|^2
			&\le {3}\limsup_{n\to\infty}\left| \E\e^{i\xi S_n(\varphi)}
			- \E\e^{i\xi S_n(\psi)}\right|^2
			+ { 3}\left| \E\e^{i\xi \sigma W(\varphi)} - \E\e^{i\xi \sigma W(\psi)}\right|^2\\
		&\le {3}\xi^2\limsup_{n\to\infty}\E\left( |S_n(\varphi)-S_n(\psi)|^2\right) +
			{3}\xi^2\sigma^2
			\E\left(|W(\varphi)-W(\psi)|^2\right),
	\end{align*}
	since $|\e^{iz}-\e^{iy}|\le|z-y|$ for all $z,y\in\R$. Property \eqref{UI}
	now ensures that, for all $\xi\in\R$,
	\[
		\limsup_{n\to\infty}\left| \E\e^{i\xi S_n(\varphi)} - \E\e^{i\xi \sigma W(\varphi)}\right|^2
		\le { 6}\xi^2\sigma^2\varepsilon\qquad\text{for every $\xi\in\R$}.
	\]
	This completes the proof since the left-hand side does not depend on $\varepsilon$.
\end{proof}

\begin{lemma}\label{lem:AI}
	Let $\delta>0$, and suppose that $(S_n(\psi_1)\,,S_n(\psi_2))$
	are asymptotically independent as $n\to\infty$ for all
	 $\psi_1,\psi_2\in\mathscr{U}$
	 such that ${\rm sep}({\rm supp}[\psi_1]\,,{\rm supp}[\psi_2]) \geq \delta$
	 $[$see \eqref{sep}$]$. Then,
	$(S_n(\varphi_1),\ldots,S_n(\varphi_N))$ are asymptotically independent as $n\to\infty$
	$[$see Definition \ref{def1}$]$
	for all $\varphi_1,\ldots,\varphi_N\in\mathscr{U}$ that satisfy
	\[
		{\rm sep}({\rm supp}[\varphi_i]\,,{\rm supp}[\varphi_j]) \geq \delta
		\qquad\text{for $1\leq i \neq j \leq N$ and integers $N\ge2$.}
	\]
\end{lemma}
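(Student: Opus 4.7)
The plan is to prove this by induction on $N$, with the base case $N=2$ being precisely the hypothesis. The structural fact that makes the induction work is that $\mathscr{U}$ is a vector space (it consists of \emph{all} linear combinations of indicators of upright boxes) and that the map $\varphi\mapsto S_n(\varphi)$ is linear. These together let me collapse any finite linear combination $\sum_{j=1}^{N-1}\xi_j S_n(\varphi_j)$ into a single object of the form $S_n(\Psi)$ with $\Psi\in\mathscr{U}$, at which point the pairwise hypothesis becomes applicable.

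For the inductive step, I would fix arbitrary $\xi_1,\ldots,\xi_N\in\R$ and set
\[
    \Psi := \sum_{j=1}^{N-1}\xi_j\varphi_j\in\mathscr{U}.
\]
Since $\text{\rm supp}[\Psi]\subset\bigcup_{j=1}^{N-1}\text{\rm supp}[\varphi_j]$, a quick application of the definition of $\text{\rm sep}$ gives $\text{\rm sep}(\text{\rm supp}[\Psi]\,,\text{\rm supp}[\varphi_N])\ge\delta$. The pairwise hypothesis applied to $(\Psi,\varphi_N)$ with parameters $(a,b)=(1,\xi_N)$ then yields
\[
    \left|\E\e^{i S_n(\Psi)+i\xi_N S_n(\varphi_N)}
    - \E\e^{i S_n(\Psi)}\,\E\e^{i\xi_N S_n(\varphi_N)}\right|\longrightarrow 0.
\]
Using linearity, $S_n(\Psi)=\sum_{j=1}^{N-1}\xi_j S_n(\varphi_j)$, and the induction hypothesis gives
\[
    \left|\E\e^{i S_n(\Psi)} - \prod_{j=1}^{N-1}\E\e^{i\xi_j S_n(\varphi_j)}\right|\longrightarrow 0.
\]
Combining these two estimates via the triangle inequality, together with the trivial bound $|\E\e^{i\xi_N S_n(\varphi_N)}|\le1$, produces the desired $N$-fold limit.

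The only things to be careful about are the two small checks I flagged above: that $\Psi$ actually lies in $\mathscr{U}$ (which is immediate from the vector-space structure), and that $\Psi$ remains $\delta$-separated from $\varphi_N$ (which follows from the inclusion of supports). I do not expect any real obstacle here; the lemma is essentially a bookkeeping extension of the pairwise hypothesis in \textbf{(AI)} to $N$-tuples, made possible because $\mathscr{U}$ is closed under linear combinations while preserving the separation property. No use of stationarity, short-range dependence, or any analytic estimate beyond $|e^{iz}|=1$ is needed.
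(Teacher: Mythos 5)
Your proof is correct and follows essentially the same route as the paper: the paper's telescoping bound on $D_{m,n}$ with $\Phi_{m-1}=\varphi_1+\cdots+\varphi_{m-1}$ is just your induction unrolled, with the same two key observations (closure of $\mathscr{U}$ under linear combinations, and ${\rm supp}[\Psi]\subset\cup_j{\rm supp}[\varphi_j]$ preserving the $\delta$-separation). The only cosmetic difference is that you absorb the coefficients $\xi_j$ into $\Psi$ from the start, whereas the paper runs the argument with unit coefficients and relabels $\varphi_j$ as $\xi_j\varphi_j$ at the end.
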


\begin{proof}
	Set $\Phi_m := \varphi_1+\cdots+\varphi_m$ for
	all $m=1,\ldots,N$.
	For every $n\in\N$, let $D_{1,n}:=0$ and define
	\[
		D_{m,n}:= \bigg| \E\bigg[\prod_{j=1}^m\e^{i S_n(\varphi_j)} \bigg] -
		\prod_{j=1}^m\E \e^{iS_n(\varphi_j)} \bigg| =
		\bigg| \E\e^{i S_n(\Phi_m)} -
		\prod_{j=1}^m\E \e^{iS_n(\varphi_j)} \bigg|,
	\]
	for  $m=2\,,\ldots,N$.
	The linearity of $S_n$ and the triangle inequality together imply that for every $n\in\mathbb{N}$
	and $m=2,\ldots,N$,
	\begin{align*}
		D_{m,n}&=\left| \E\e^{i S_n(\Phi_m)} -
			\E\e^{iS_n(\Phi_{m-1})}\E\e^{iS_n(\varphi_m)}\right|
			+ \bigg| \E\e^{iS_n(\Phi_{m-1})} \E\e^{iS_n(\varphi_m)}-
			\prod_{j=1}^m\E\e^{iS_n(\varphi_j)}\bigg|\\
		&\le
%			\left| \E\e^{i S_n(\Phi_m)} -
%			\E\e^{iS_n(\Phi_{m-1})}\E\e^{iS_n(\varphi_m)}\right|
%			+ D_{m-1}\\ &=
		\left|\Cov\left[ \e^{iS_n(\Phi_{m-1})} ~,~
			\e^{-iS_n(\varphi_m)} \right]\right|  + D_{m-1,n}.
	\end{align*}
	Subtract $D_{m-1,n}$ from both sides and sum over $m$ to find that
	\[
		D_{N,n} \le \sum_{m=2}^N \left|\Cov\left[ \e^{iS_n(\Phi_{m-1})} ~,~
		\e^{iS_n(-\varphi_m)} \right]\right|.
	\]
	The separation condition on the $\varphi_j$'s implies
	that ${\rm sep}({\rm supp}[\Phi_{m-1}]\,, {\rm supp}[-\varphi_m]) \geq \delta$
	for all $m=2,\ldots,N$, and hence
	$\lim_{n\to\infty}D_{N,n}=0$ by asymptotic independence [see \eqref{AI}].
	Now relabel $\varphi_j$ as $\xi_j\varphi_j$, where $\xi_1,\ldots,\xi_N$ are arbitrary nonzero constants,
	in order to deduce that $S_n(\varphi_1),\ldots,S_n(\varphi_N)$ are asymptotically independent
	as $n\to\infty$. [This requires only the fact that $\xi_i\varphi_i$
	has the same  support as $\varphi_i$ for $i = 1, \ldots, N$.]
\end{proof}

\subsection{Proof of Theorem \ref{th:CLT}}\label{sec2.3}
In this section, we assume that the conditions of Theorem \ref{th:CLT} are met.

We first prove weak convergence in a special case, where the limit can be identified with
a Brownian motion.
Choose and fix real numbers $a_1$ and $a_2<b_2,\ldots,a_d<b_d$, and define
\[
	Q(r) = (a_1\,, a_1 + r] \times (a_2\,,b_2]\times\cdots
	\times(a_d\,,b_d]\qquad \text{for every $r \in(0\,,1]$}.
\]
Observe that $Q(r)\in\mathscr{U}$ for every $r\in(0\,,1]$.
For every $n \in\N$, we define one-parameter processes $Y_n$ and $Y$ as follows:
\[
	Y_n(r):= S_n(\mathbf{1}_{Q(r)})
	\quad\text{and}\quad
	Y(r) := \sigma W(\mathbf{1}_{Q(r)})\qquad \text{for every $r\in(0\,,1]$}.
\]
It is immediate that $Y$ is a one-dimensional Brownian motion with variance
$\sigma^2{\prod_{i=2}^d(b_i-a_i)}$. Our main objective is to prove the following specialized form of Theorem \ref{th:CLT}.

\begin{proposition}\label{pr:X}
	$Y_n \xrightarrow{\text{\rm fdd}}Y$ as $n\to\infty$.
\end{proposition}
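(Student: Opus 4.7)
The plan is to pass to a subsequential limit of the laws of $\{Y_n\}$ and identify it as a Brownian motion via Paul L\'evy's characterization; uniqueness of the limit then delivers fdd convergence of the full sequence. First I would establish tightness. Lemma \ref{variance:limit} applied to $\varphi = \mathbf{1}_{Q(t)\setminus Q(s)}$ for $s<t$ yields $\E[|Y_n(t)-Y_n(s)|^2] \leq C(t-s)$ uniformly in $n$, with limit $\sigma^2(t-s)\prod_{i=2}^d(b_i-a_i)$. Combined with the piecewise-constant structure of $r\mapsto Y_n(r)$ and the $L^2$-vanishing of its individual jumps (of order $n^{-d/2}\|\zeta_0\|_2$), this should yield tightness of $\{Y_n\}$ in $C([0,1])$. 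Prokhorov's theorem then furnishes a subsequence $n_l\to\infty$ along which $Y_{n_l}$ converges in distribution in $C([0,1])$ to some process $\tilde Y$.

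Next I would show that $\tilde Y$ is a continuous, mean-zero L\'evy process with variance $\sigma^2 r\prod_{i=2}^d(b_i-a_i)$ at time $r$. The variance identification is immediate from Lemma \ref{variance:limit} (applied to $\mathbf{1}_{Q(r)}$). For independence of increments, observe that $Q(s)$ and $Q(t)\setminus Q(s)$ are adjacent with separation zero, so Condition \textbf{(AI)} does not apply directly; I would introduce a gap by approximating $\mathbf{1}_{Q(s)}$ by $\mathbf{1}_{Q(s-\epsilon)}$. Lemma \ref{variance:limit} bounds the $L^2$-approximation error by $O(\sqrt\epsilon)$ uniformly in $n$, while the approximants have supports separated by $\epsilon>0$, so \textbf{(AI)} together with Lemma \ref{lem:AI} yield asymptotic independence of $S_n(\mathbf{1}_{Q(s-\epsilon)})$ and $S_n(\mathbf{1}_{Q(t)\setminus Q(s)})$. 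Sending $\epsilon\to 0$ delivers asymptotic independence of $Y_n(s)$ and $Y_n(t)-Y_n(s)$, and iterating via Lemma \ref{lem:AI} extends this to any finite partition of $(0,1]$. Stationarity of the increments follows from the stationarity of $\zeta$ under integer shifts combined with the translation invariance of the $L^2$-limit in \eqref{Riemann}, with the non-integer portion of the shift absorbed by a Riemann-sum approximation.

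With these properties in hand, $\tilde Y$ is a mean-zero continuous L\'evy process whose variance at time $1$ matches that of $Y$; L\'evy's characterization then forces $\tilde Y\overset{d}{=} Y$. Since every subsequential limit has the law of $Y$, the full sequence converges, $Y_n\xrightarrow{\text{\rm fdd}} Y$, as asserted.

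The main obstacle is the tightness step in $C([0,1])$ under only second-moment hypotheses on $\zeta$: the usual Kolmogorov-Chentsov criterion demands a moment bound $\E[|Y_n(t)-Y_n(s)|^p]\leq C|t-s|^{1+\delta}$ for some $p>2$, which is not available in our setting. One must instead exploit the partial-sum structure of $Y_n$ along the first coordinate, using a maximal inequality combined with the uniform second-moment estimate to verify a Billingsley-type modulus criterion. A secondary subtlety is the stationarity-of-increments argument, where one must carefully reconcile the continuous parameter $r$ with the lattice origin of the stationarity of $\zeta$; here again the vanishing of single-summand contributions is what rescues the argument.
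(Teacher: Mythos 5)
Your overall architecture --- extract a subsequential limit, identify it as a Brownian motion via L\'evy's characterization, using (\textbf{AI}) with an $\epsilon$-gap for independence of increments and stationarity of $\zeta$ for stationarity of increments, then conclude by uniqueness of the limit law --- is exactly the paper's. The genuine gap is the compactness step. You route it through tightness of $\{Y_n\}$ in $C([0,1])$, and you correctly flag that Kolmogorov--Chentsov is unavailable because only second moments are assumed; but your proposed repair (a maximal inequality feeding a Billingsley-type modulus criterion) is not available either. Under the bare hypotheses \eqref{SRD} and (\textbf{AI}) there is no Doob, M\'oricz, or Newman--Wright type maximal inequality to invoke: those require a martingale structure, moment bounds of order $p>2$ with a superadditive right-hand side, or association, none of which is assumed. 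The bound $\E(|Y_n(t)-Y_n(s)|^2)\le C|t-s|$ plus Chebyshev over a $\delta$-net gives a modulus estimate of order $C/\epsilon^2$ uniformly in $\delta$, which does not vanish, so functional tightness genuinely cannot be extracted from the stated assumptions. The missing idea is that for fdd convergence you do not need it: the uniform $L^2$ bound of Lemma \ref{variance:limit} gives tightness of each one-dimensional marginal $Y_n(r)$ by Chebyshev, and Cantor diagonalization over $r\in\mathbb{Q}\cap(0,1]$ already produces a subsequential fdd limit $\bar Y$ on the rationals. That is what the paper does; the limit is then extended to $[0,1]$ and shown continuous using the increment bound $\E(|\bar Y(R)-\bar Y(r)|^2)\le \sigma^2|R-r|\prod_{i=2}^d(b_i-a_i)$ obtained via Fatou, and this $L^2$-continuity is also what lets the $\epsilon$-gaps close in the independence-of-increments argument and what controls the passage from rational to general times in the final characteristic-function estimate.

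Two smaller points. First, continuity of the limit is essential input to L\'evy's characterization (a compensated compound Poisson process also has mean zero and variance linear in $r$), and in your write-up the only source of continuity is the problematic $C([0,1])$ convergence, so the gap above propagates into the identification step. Second, ``the variance identification is immediate from Lemma \ref{variance:limit}'' conflates convergence in distribution with convergence of second moments; one needs uniform integrability of $\{Y_n^2(1)\}$ along the subsequence, which the paper extracts from the standard argument for convergence to a normal law (Bradley, Theorem 1.19) rather than from the lemma alone.
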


Let us first deduce Theorem \ref{th:CLT} from its specialized form
Proposition \ref{pr:X}. The proposition will be verified subsequently.

\begin{proof}[Proof of Theorem \ref{th:CLT}]
	Let $\mathscr{D}:=\cup_{\delta>0}\mathscr{D}_\delta$,
	where for every $\delta>0$, $\mathscr{D}_{\delta}$
	denotes the collection of all functions  {$\psi\in\mathscr{U} $}
	that have the form,
	\begin{equation}\label{psi:rep}
		\psi=\psi_1+\cdots+\psi_m,
		\quad\text{where}\quad
		\psi_i = a_i\bm{1}_{Q_i},
	\end{equation}
	where $m\in\N$, $a_1,\ldots,a_m\in\R\setminus\{0\}$,  and
	$Q_1,\ldots,Q_m\in\mathscr{U}$ { are  upright boxes of the form
	$Q_i =(b^i_1, c^i_1] \times \cdots \times (b^i_d, c^i_d]$ for real numbers
	$b^i_1<c^i_1, \dots, b^i_d< c^i_d$, $i=1,\dots, m$, }
	and satisfy
	\[
		\text{\rm sep}\left( Q_i\,,Q_j\right)\geq \delta
		\quad\text{whenever $1\le i\neq j\le m$};
	\]
	see \eqref{sep}. Because $\mathscr{D}$ is dense in $L^2(\R^d)$, and hence also dense in $\mathscr{C}$,
	Lemma \ref{lem:dense} will imply Theorem \ref{th:CLT} once we prove that
	$S_{n}(\psi) \xrightarrow{\text{\rm d}\,}
	\sigma W(\psi)$, as $n\to\infty$, for every
	$\psi\in\mathscr{D}$.
	With this aim in mind, let us choose and fix some $\delta > 0$ and  $\psi\in\mathscr{D}_{\delta}$,
	and assume that $\psi$ has the representation \eqref{psi:rep}.
	By linearity, $S_{n}(\psi) = \sum_{i=1}^m
	S_{n}( \psi_i) =: \sum_{i=1}^m X_{i,n}$ a.s.,
	where $X_{i,n} := S_{n}(\psi_i)$.
	The asymptotic independence condition in Theorem \ref{th:CLT}
	and Lemma \ref{lem:AI} ensure that $\{X_{i,n}\}_{i=1}^m$
	describes an asymptotically independent sequence as $n\to\infty$;
	and Proposition \ref{pr:X}  implies that
	$X_{i,n}\xrightarrow{\text{d}\,}\sigma W(\psi_i)$
	as $n\to\infty$, for every $i=1,\ldots,m$.
	The asserted asymptotic independence then implies that
	$S_{n}(\psi) \xrightarrow{\text{d}\,}  Y_1+\cdots+Y_m$
	as $n\to\infty$,
	where $Y_1,\ldots,Y_m$ are independent, and the distribution of $Y_i$
	is the same as that of $\sigma W(\psi_i)$ for every $i=1,\ldots,m$.
	Because the supports of
	the $\psi_i$'s are disjoint, $W(\psi_1),\ldots,W(\psi_m)$
	are uncorrelated, hence independent, Gaussian random variables.
	In particular,  $S_{n}(\psi) \xrightarrow{\text{d}\,}
	\sigma W(\psi_1)+\cdots+\sigma W(\psi_m ) =\sigma W(\psi)$ as $n\to\infty$;
	see \eqref{psi:rep} for the last identity. This concludes the proof of
	Theorem \ref{th:CLT}.
\end{proof}

\begin{proof}[Proof of Proposition \ref{pr:X}]
	We will prove this proposition in  five steps.\\

	\noindent
	\textbf{ Step 1.} {\em The laws of $\{Y_n(r)\}_{n\geq1}$ are
	$L^2$-bounded uniformly in $r\in(0\,,1]$
	and $n\in\N$, and hence also tight uniformly over all $r\in(0\,,1]$. }

    	In order to see why, apply \eqref{UI} in order to see that
	\begin{equation}\label{eq:X:tight}
		\E\left( |Y_n(r)|^2\right) \leq \bar{\sigma}^2 n^{-d}
		\sum_{k\in\Z^d} \mathbf{1}_{Q(r)}(k/n)
		\le \bar{\sigma}^2 n^{-d}
		\sum_{k\in\Z^d} \mathbf{1}_{Q(1)}(k/n)
    	\end{equation}
    	for every $r \in (0\,, 1]$ and $n\in\N$. The final quantity in \eqref{eq:X:tight}
	is bounded uniformly
	in $n\in\N$ by $\bar{\sigma}^2$ times
	the upper Riemann sum of  $\mathbf{1}_{Q(1)}$, and the latter is finite.
   	This yields the desired $L^2$-boundedness, and tightness follows from Chebyshev's
	inequality.\medskip

	\noindent
	\textbf{Step 2.} {\em For every unbounded sequence $0<n_1<n_2<\cdots$ there exists a
	subsequence $n'=\{n_k'\}_{k=1}^\infty$ and random variables
	$\bar{Y}=\{\bar{Y}(r)\}_{r\in\mathbb{Q}\cap(0,1]}$ such that
	$Y_{n'_k}\xrightarrow{\text{\rm fdd}\,} \bar{Y}$ as $k\to\infty$.
	}

	This follows from uniform tightness in Step 1 and Cantor's diagonalization.\medskip

	\noindent
	\textbf{Step 3.} {\em $\bar{Y}$
	can be extended to a continuous process $\bar{Y}=\{\bar{Y}(r)\}_{r\in[0,1]}$.}

	Let $n$ tend to infinity along the subsequence
	$n'$, and appeal to Step 2, Fatou's lemma, and \eqref{UI} in order to
	see that for  every $R > r >0$
	with $r, R \in \mathbb{Q} \cap (0\,, 1]$,
	\[
		\E\left(|\bar{Y}(R) - \bar{Y}(r)|^2\right)
		\le\liminf_{k\to\infty}
		\E\left(|Y_{n'_k}(R) - Y_{n'_k}(r)|^2\right)
	 	=  \sigma^2|R - r|\prod_{i=2}^d(b_i-a_i).
	\]
	The continuity of $Y$ follows from this and
	Kolmogorov continuity theorem \cite[Theorem 2.8]{KS}.

	\medskip\noindent
	\textbf{Step 4.} \emph{We can realize $\bar{Y}=\{\bar{Y}(r)\}_{r\in[0,1]}$
	as an infinitely divisible process with stationary increments such that $\bar{Y}(0)=0$ and $\E[\bar{Y}(r)]=0$
	for all $r\in[0\,,1]$. Therefore, the process $\bar{Y}=\{\bar{Y}(r)\}_{r\in[0,1]}$ is
	a centered Brownian motion indexed and normalized
	such that $\Var[\bar{Y}(1)]=\sigma^2$.}

	We first prove that $\bar{Y}=\{\bar{Y}(r)\}_{r\in[0,1]}$
	is infinitely divisible.
	Let us choose and fix
	an integer $M\ge1$ and $M+1$  real numbers $0=:r_0 < r_1 < \cdots < r_M$.
	For every  sufficiently small $\delta > 0$, there exist rational points
	$r_{1, \delta}^\pm,\ldots,r_{M,\delta}^\pm\in\mathbb{Q}$ such that
	$0=r_0= r_{0, \delta}^+ < r_{1, \delta}^- <r_1 <
		r_{1, \delta}^+ < \cdots < r_{j, \delta}^- <r_j <  r_{j, \delta}^+
		< \cdots < r_{M, \delta}^- < r_M,$
	and $\delta \leq r_{j, \delta}^+ - r_{j, \delta}^- \leq 2\delta$
	for all $j = 1, \ldots, M-1$ and $r_M - r_{M, \delta}^- \leq \delta.$
	We choose and fix such a $\delta$ in order to deduce from
	the asymptotic independence condition (\text{\bf AI})
	and Lemma \ref{lem:AI} that
	$\{ Y_n(r_{j, \delta}^-)-Y_n(r_{j-1, \delta}^+)\}_{j=1}^{M}$
	are  asymptotically independent as $n\to\infty$.
	Hence, the random variables $\bar{Y}(r_{1, \delta}^-),
	\bar{Y}(r_{2, \delta}^-) - \bar{Y}(r_{1, \delta}^+),
	\ldots, \bar{Y}(r_{M, \delta}^-) - \bar{Y}(r_{M-1, \delta}^+)$
	are independent.
	Moreover, we may appeal to the continuity of
	$\bar{Y}=\{\bar{Y}(r)\}_{r\in[0,1]}$ (see Step 3) in order to
	conclude that for all ${\alpha}_1,\ldots,{\alpha}_M\in\R$,
	\begin{align*}
		\E\left[\e^{i\sum_{j=1}^M{\alpha}_j\left(\bar{Y}(r_j) - \bar{Y}(r_{j-1})\right)}\right]
			&= \lim_{\delta \to 0}\E\left[\e^{i\sum_{j=1}^M
			{\alpha}_j\left(\bar{Y}(r_{j, \delta}^-) - \bar{Y}(r_{j-1, \delta}^+)\right)}
			\right] \\
		& = \lim_{\delta \to 0}  \prod_{j = 1}^{M}\E\left[
			\e^{i{\alpha}_j\left(\bar{Y}(r_{j, \delta}^-) - \bar{Y}(r_{j-1, \delta}^+)\right)}\right]
			= \prod_{j = 1}^{M}\E\left[\e^{i{\alpha}_j\left(\bar{Y}(r_{j}) - \bar{Y}(r_{j-1})\right)}\right].
	\end{align*}
	It follows that the random variables
	$\bar{Y}(r_{1}), \bar{Y}(r_{2}) - \bar{Y}(r_{1}),\ldots, \bar{Y}(r_{M}) - \bar{Y}(r_{M-1})$
	are independent, whence $\bar{Y}=\{\bar{Y}(r)\}_{r\in[0,1]}$
	is infinitely divisible.

	Since $\zeta=\{\zeta_k\}_{k\in\Z^d}$ is stationary, the law of $Y_n(s)-Y_n(r)$
	is the same as the distribution of $Y_n(s-r)$ whenever $0<r<s$.
	Thus, we see that the distribution of
	{$\bar{Y}(s)-\bar{Y}(r)$}  is the same  as that of $\bar{Y}(s - r)$
	whenever $0<r<s$ are rational. The continuity of $\bar{Y}$ now ensures
	that the preceding holds in fact whenever $0<r<s$.
	This proves that the process $\bar{Y}=\{\bar{Y}(r)\}_{r\in\mathbb{Q}\cap[0,1]}$
	has stationary increments.

	Because of Step 1, and since $\E[Y_n(r)]=0$ for all $r\in(0\,,1]$ and $n\in\N$,
	it follows that $\E[\bar{Y}(r)]=0$ for all $r \in \mathbb{Q}\cap(0\,,1]$. A
	second appeal to Step 1 and continuity (Step 3) shows that $\E[\bar{Y}(r)]=0$
	for every $r\in[0\,,1]$.

	Finally, L\'evy's characteristic theorem of Brownian motion ensures that
	$\bar{Y}=\{\bar{Y}(r)\}_{r\in[0,1]}$ is
	a Brownian motion; see Bertoin \cite{Bertoin}.
	Therefore, it remains to check that $\Var[\bar{Y}(1)]=\sigma^2{\prod_{i=2}^d(b_i-a_i)}$.
	Indeed, Step 2 ensures that
	$Y_{n'_k}(1)\xrightarrow{\text{\rm d}\,} \bar{Y}(1)$ as $k\to\infty$.
	Therefore, the uniformly integrability
	condition \eqref{E:UI}  and Lemma \ref{UIequiv} imply that
	$\Var[\bar{Y}(1)] = \lim_{k \to\infty}\E[Y_{n'_k}^2(1)] = \sigma^2{\prod_{i=2}^d(b_i-a_i)}$, where the last
	equality is due to Lemma \ref{variance:limit}.

	% Therefore, it remains to check that $\Var[\bar{Y}(1)]=\sigma^2{\prod_{i=2}^d(b_i-a_i)}$.
	% Indeed, by a standard argument for convergence to normal distribution
	% (see the proof of Theorem 1.19 $(b) \Rightarrow (a)$ in the book by Bradley \cite{Bradley}),
	% the already-proved fact that
	% $Y_{n'_k}(1)\xrightarrow{\text{\rm d}\,} \bar{Y}(1)$ as $k\to\infty$
	% implies that $\{Y_{n'_k}^2(1)\}_{k \geq 1}$ is uniformly integrable,
	% and hence $\Var[\bar{Y}(1)] = \lim_{k \to\infty}\E[Y_{n'_k}^2(1)] = \sigma^2{\prod_{i=2}^d(b_i-a_i)}$ by Lemma \ref{variance:limit}.

	\medskip\noindent
	\textbf{Step 5.}
	{\em We are ready to complete the proof of Proposition \ref{pr:X}.}

	So far, we have proved that for every unbounded {increasing} sequence $\{n_k\}_{k=1}^\infty$
	there exists a further unbounded   {increasing } subsequence $\{n'_k\}_{k=1}^\infty$ such that the finite-dimensional
	distributions of  $\{Y_{n'_k}(r)\}_{r \in \mathbb{Q}\cap (0, 1]}$
	converge to those of a Brownian motion $\bar{Y}$ as $k\to\infty$,
	and the speed of that Brownian motion  is always $\sigma^2{\prod_{i=2}^d(b_i-a_i)}$.
	In particular, the law of $\bar{Y}$ is the same as the law of
	$Y$ regardless of the choice of the original subsequence $\{n_k\}_{k=1}^\infty$. This proves that
	the finite-dimensional distributions of $\{Y_n(r)\}_{r\in\mathbb{Q}\cap(0\,,1]}$
	converge to those of $\{Y(r)\}_{r\in\mathbb{Q}\cap(0\,,1]}$.

	In order to conclude Proposition \ref{pr:X}, we need to show that for all integer
	$M\ge1$ and  for all $0 < r_1 < \cdots < r_M\le 1$,
	the characteristic function of
	$(Y_n(r_1)\,, \ldots, Y_n(r_M))$ converges to the characteristic
	function of $(Y(r_1)\,, \ldots, Y(r_M))$ as $n\to\infty$.
	For any $\varepsilon >0$, we can choose $R_1,\ldots,R_M\in\mathbb{Q}$ such that
	\begin{equation}\label{approx}
		r_k < R_k < r_k + \varepsilon \quad \text{and} \quad
		\|Y(r_k) - Y(R_k)\|_2 < \varepsilon \qquad \text{for all $k = 1, \ldots, M$}.
	\end{equation}
	Define
	$\mathscr{E}_n(\bm{\alpha}\,,\bm{\beta}):=\E[\e^{i\sum_{k=1}^M\alpha_kY_n(\beta_k)}]$
	and
	$\mathscr{E}(\bm{\alpha}\,,\bm{\beta}) := \E[\e^{i\sum_{k=1}^M\alpha_kY(\beta_k)}]$
	for all $\bm{\alpha}\in\R^M$ and $\bm{\beta}\in[0\,,1]^M$. Our goal is to prove that
	$\lim_{n\to\infty}\mathscr{E}_n(\bm{{\alpha}}\,,\bm{r})=\mathscr{E}(\bm{{\alpha}}\,,\bm{r})$
	for all $\bm{{\alpha}}\in\R^M$.  With this aim in mind, we can write
	\begin{align}\notag
		&\left|\mathscr{E}_n(\bm{{\alpha}}\,,\bm{r}) - \mathscr{E}(\bm{{\alpha}}\,,\bm{r}) \right|
			\le \left| \mathscr{E}_n(\bm{{\alpha}}\,,\bm{r}) - \mathscr{E}_n(\bm{{\alpha}}\,,\bm{R}) \right|
			+ \left|\mathscr{E}_n(\bm{{\alpha}}\,,\bm{R}) - \mathscr{E}(\bm{{\alpha}}\,,\bm{R}) \right|
			+ \left| \mathscr{E}(\bm{{\alpha}}\,,\bm{R}) - \mathscr{E}(\bm{{\alpha}}\,,\bm{r}) \right| \\\notag
		&\quad \leq \sum_{k = 1}^M |{\alpha}_k|\, \|Y_n(r_k) - Y_n(R_k)\|_2
			+ \left| \mathscr{E}_n(\bm{{\alpha}}\,,\bm{R}) - \mathscr{E}(\bm{{\alpha}}\,,\bm{R}) \right|
			+  \sum_{k = 1}^M |{\alpha}_k|\, \|Y(R_k) - Y(r_k)\|_2  \\
		&\quad \leq \sum_{k = 1}^M |{\alpha}_k|\, \|Y_n(r_k) - Y_n(R_k)\|_2
			+ \left| \mathscr{E}_n(\bm{{\alpha}}\,,\bm{R}) -
			\mathscr{E}(\bm{{\alpha}}\,,\bm{R}) \right| + \varepsilon \sum_{k = 1}^M |{\alpha}_k|,
			\label{final}
	\end{align}
	where the last inequality follows from \eqref{approx}.
	Since $\{Y_n(r)\}_{ r\in\mathbb{Q}\cap(0,1]}
	\xrightarrow{\text{\rm fdd\,}}
	\{Y(r)\}_{r\in\mathbb{Q}\cap(0,1]}$,
	the middle term in \eqref{final}
	vanishes as $n\to\infty$. Therefore,  \eqref{UI} implies that
	\begin{align*}
		\limsup_{n\to\infty}\left|\mathscr{E}_n(\bm{{\alpha}}\,,\bm{r}) - \mathscr{E}(\bm{{\alpha}}\,,\bm{r}) \right|
		&\leq  \sigma{\prod_{i=2}^d(b_i-a_i)^{1/2}} \sum_{k = 1}^M|{\alpha}_k| (R_k - r_k)^{1/2}
		+  \varepsilon\sum_{k = 1}^M|{\alpha}_k| \\
& \leq (\sigma \sqrt{\varepsilon} {\prod_{i=2}^d(b_i-a_i)^{1/2}}+ \varepsilon)\sum_{k = 1}^M|a_k|.
	\end{align*}
	This concludes the proof of Proposition \ref{pr:X} since $\varepsilon>0$ is arbitrary.
\end{proof}

\section{CLT for infinitely-many interacting diffusions}\label{S:SPDE}

The central limit theorem in Theorem \ref{th:CLT} can be applied to infinitely-many interacting diffusion processes, as indicated in the title of this paper.

\subsection{Malliavin calculus}\label{subsec:MC}
We introduce some elements of Malliavin calculus in order to establish the central limit theorem for infinitely-many interacting diffusion processes.
Let $\mathcal{H}= L^2(\R_+ \times \Z^d)$, and
recall that the Gaussian family $\{ \eta(h)\}_{h \in \mathcal{H}}$ formed by the Wiener integrals
$\mathcal{H}\ni h\mapsto \eta(h)= \int_0^\infty\sum_{x \in \Z^d}  h(s\,,x)\,\d B_s(x)$
defines an  {\it isonormal Gaussian process}.
In this framework, we can develop the Malliavin calculus as has been done, for instance,
by Nualart \cite{Nualart}.

We denote by $D$ the derivative operator, and $\mathbb{D}^{1,2}$ the Gaussian Sobolev space
generated by all $F\in L^2(\Omega)$ with derivative $DF\in L^2(\Omega\,;\mathcal{H})$.
In accord with the Poincar\'e inequality,
\begin{equation} \label{Poincare:Cov}
	|\Cov(F\,, G)| \le \int_0^\infty\sum_{z \in \Z^d}
	\left\| D_{s,z } F  \right\|_2
	\left\| D_{s,z}G \right\|_2\,\d s
	\qquad\text{for all $F,G\in\mathbb{D}^{1,2}$}.
\end{equation}
We will use this inequality extensively in the sequel.

\subsection{Comments on the solution \texorpdfstring{$u_t(x)$}{u(t,x)}}

The general theory of stochastic PDEs indexed by LCA groups (see Khoshnevisan
and Kim \cite{KhK15}) implies
that we may write the solution to \eqref{SHE}  in
the following \emph{mild} form (variation of parameters): Almost surely for all $t>0$ and $x\in\Z^d$,
\begin{align}\label{mild}
	u_t(x) = 1 + \int_0^t\sum_{y\in\Z^d}
	\bm{p}_{t -s}(y-x)\Phi(u_s(y))\, \d B_s(y),
\end{align}
where, for every $r\ge0$ and $w\in\Z^d$,  { $\bm{p}_t(w)=\P\{X_t=w\}$}
for a continuous-time random walk
$X=\{X_t\}_{t\ge0}$ { starting form the origin} on $\Z^d$ whose generator is $L$.
It also follows from general theory \cite{KhK15}
that for every real number $k\ge2$ there exists a positive real number $L=L(k\,,\Phi)$
such that
\begin{equation}\label{moments}
	\sup_{x\in\Z^d}\E\left( |u_t(x)|^k\right) \le L\e^{Lt}
	\quad \text{and} \quad
	\adjustlimits\sup_{n \geq 0}\sup_{x\in\Z^d}\E\left( |u_n(t\,,x)|^k\right) \le L\e^{Lt}
	\qquad\text{for all $t\ge0$},
\end{equation}
where $u_n$ denote the $n$th stage of the Picard iteration
approximation of $u$; see \eqref{Picard} below.

Finally, let us record the following elementary fact.

\begin{proposition}\label{stationarity}
	The random field $\{u_t(x): x \in\Z^d\}$ is stationary for every $t\ge0$.
\end{proposition}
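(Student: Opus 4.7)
The plan is to reduce stationarity of the solution to stationarity along the Picard iteration scheme, and then to exploit translation invariance of both the heat kernel and the driving noise.

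First, define the Picard approximations $u_n$ by setting $u_0(t,x)\equiv1$ and, recursively,
\[
    u_{n+1}(t,x) := 1 + \int_0^t\sum_{y\in\Z^d}\bm{p}_{t-s}(y-x)\,\Phi(u_n(s,y))\,\d B_s(y),
\]
for all $n\ge0$, $t\ge0$, and $x\in\Z^d$. The general theory of Khoshnevisan--Kim cited above, together with the bound \eqref{moments}, guarantees that $u_n(t,x)\to u_t(x)$ in $L^2(\Omega)$ for every fixed $t\ge0$ and $x\in\Z^d$.

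For each $z\in\Z^d$, introduce the spatial-shift operator $\tau_z$ acting on the noise by $(\tau_z B)(x):=B(x+z)$. Since $B=\{B(x)\}_{x\in\Z^d}$ is a family of i.i.d.\ Brownian motions, $\tau_zB$ has the same law as $B$; in particular, $\tau_z$ preserves the law of the joint process. I will now prove by induction on $n$ the functional identity
\[
    u_n(t,x+z)(B)=u_n(t,x)(\tau_z B)\qquad\text{a.s., for all $n\ge0$, $t\ge0$, $x\in\Z^d$}.
\]
The base case $n=0$ is immediate since $u_0\equiv1$. For the inductive step, start from the definition of $u_{n+1}(t,x+z)$, perform the change of summation variable $y\mapsto y+z$ (which leaves $\Z^d$ invariant), and use the translation invariance of the heat kernel, i.e.\ $\bm{p}_{t-s}(y+z-(x+z))=\bm{p}_{t-s}(y-x)$. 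The stochastic integrand then becomes a function of $u_n(s,y+z)$ integrated against $\d B_s(y+z)=\d(\tau_z B)_s(y)$; applying the inductive hypothesis to replace $u_n(s,y+z)(B)$ by $u_n(s,y)(\tau_z B)$ completes the step.

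Consequently, for every $n\ge0$, the field $\{u_n(t,x):x\in\Z^d\}$ is stationary: its law is invariant under arbitrary spatial shifts, since these correspond to shifts of the noise, which preserves its law. Passing to the limit $n\to\infty$ in $L^2$ (hence in distribution) and invoking the fact that stationarity of the finite-dimensional distributions is preserved under weak convergence, we conclude that $\{u_t(x):x\in\Z^d\}$ is stationary for every $t\ge0$. The only delicate point is the verification of the functional identity for Picard iterates, since one must carefully track measurability as an explicit functional of the noise; but this is routine once the inductive bookkeeping is set up.
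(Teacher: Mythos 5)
Your proof is correct and follows essentially the same route the paper indicates: the paper omits the argument but states that it proceeds as in \cite[Lemma 7.1]{CKNP} using translation invariance of the noise, which is exactly your induction on Picard iterates combined with the shift-invariance of the i.i.d.\ Brownian family and passage to the $L^2$ limit. Nothing further is needed.
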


We omit the proof as it follows along the same lines as in the proof of
\cite[Lemma 7.1]{CKNP}, using the fact that the law of
space-time white noise on $\R_+ \times \Z^d$ is  translation invariant.

\subsection{The Malliavin derivative of \texorpdfstring{$u_t(x)$}{u(t,x)}}

The following is the main result of this section.

\begin{proposition}\label{derivative:estimate}
	$u_t(x) \in \cap_{k\ge2}\mathbb{D}^{1, k}$ for all $(t\,, x) \in \R_+ \times \Z^d$.
	Furthermore, for all real numbers $T>0$ and $k\ge2$ there exists a number $C=C_{k,T,\Phi}>0$
	such that
	\begin{align}\label{Du(t,x)}
		\left\| D_{s, y}u_t(x)\right\|_k\le C \bm{p}_{t - s}(x - y) 
		\quad\text{for all $0<s<t<T$ and
		$x,y\in \Z^d$.}
	\end{align}
\end{proposition}

The proof of Proposition \ref{derivative:estimate} rests on the following sub-semigroup
property of the squares of the transition functions of the underlying random walk.

\begin{lemma}\label{lem:key}
	$\sum_{y \in \Z^d}[\bm{p}_t(x -y)\bm{p}_s(y -z) ]^2
	\leq [ \bm{p}_{t+s}(x -z)]^2$ for all $t, s \geq 0$ and $x,  z \in \Z^d$.
\end{lemma}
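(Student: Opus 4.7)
The proof is essentially one line once the right two ingredients are identified, so I will simply explain how to assemble them.

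The first ingredient is the Chapman--Kolmogorov identity for the underlying continuous-time random walk $X$ with generator $L$ on $\Z^d$. Writing $\bm{p}_t(w) = \P\{X_t = w\}$, the stationarity and independence of the increments of $X$ give, after a change of variables in the sum,
\[
    \bm{p}_{t+s}(x-z) \;=\; \sum_{y \in \Z^d} \bm{p}_t(x-y)\,\bm{p}_s(y-z)
    \qquad\text{for all $t,s\ge 0$ and $x,z\in\Z^d$.}
\]
The second ingredient is the observation that each term of this sum is nonnegative, being a product of probabilities. Setting $a_y := \bm{p}_t(x-y)\bm{p}_s(y-z) \ge 0$, we then have the termwise inequality
\[
    \sum_{y\in\Z^d} a_y^2 \;\le\; \bigg(\sum_{y\in\Z^d} a_y\bigg)^{\!2},
\]
because expanding the right-hand side yields $\sum_y a_y^2$ plus a sum of nonnegative cross terms $a_y a_{y'}$ with $y\neq y'$. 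Substituting the Chapman--Kolmogorov identity on the right-hand side produces exactly $[\bm{p}_{t+s}(x-z)]^2$, which is the claim.

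There is no real obstacle here: the only care required is in writing the Chapman--Kolmogorov identity in the convenient form above (involving the spatial differences $x-y$ and $y-z$) rather than the standard convolution form $\sum_u \bm{p}_t(u)\bm{p}_s(w-u)$, which amounts to a routine change of summation variable. In particular, no further structural property of $L$ is used beyond the fact that $\bm{p}_t$ is a nonnegative kernel on $\Z^d$ satisfying the semigroup property.
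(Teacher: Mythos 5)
Your proof is correct. It is, at bottom, the same inequality as the paper's, but packaged differently: the paper introduces two independent copies $X$ and $X'$ of the random walk, rewrites $\sum_{y}[\bm{p}_t(x-y)\bm{p}_s(y)]^2$ as the joint probability $\P\{X_{t+s}=x,\,X'_{t+s}=x,\,X_s=X_s'\}$, and then drops the event $\{X_s=X_s'\}$ to obtain $[\bm{p}_{t+s}(x)]^2$. Dropping that event is precisely your step of adding back the nonnegative cross terms $a_y a_{y'}$, $y\neq y'$, in the expansion of $\bigl(\sum_y a_y\bigr)^2$; and the identification of $\bigl(\sum_y a_y\bigr)^2$ with $[\bm{p}_{t+s}(x-z)]^2$ is exactly the Chapman--Kolmogorov identity you invoke. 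Your version is arguably the more elementary and transparent one, since it uses nothing beyond nonnegativity of the kernel and the semigroup property, and makes clear that no further structure of $L$ is needed; the paper's coupling phrasing buys a slightly slicker one-line computation at the cost of introducing auxiliary randomness. Either argument is complete as written.
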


\begin{proof}
	We may change variables in order to reduce the problem to the case that $z=0$.
	Now suppose $z=0$, and
	let $X$ and $X'$ denote two independent copies of the random walk whose generator is $L$. We
	may observe that
	\begin{align*}
		\sum_{y \in \Z^d} \left[ \bm{p}_t(x -y)\bm{p}_s(y) \right]^2
			&= \sum_{y\in\Z^d} \P\left\{ X_{t+s}-X_s=x-y\,,X_{t+s}'-X_s'=x-y\,,X_s=y\,,X_s'=y\right\}\\
			&= \P\left\{ X_{t+s}=x\,,X_{t+s}'=x\,,X_s=X_s'\right\}.
	\end{align*}
	Drop the event $\{X_s=X_s'\}$ from the above to increase the latter probability to
	$[\bm{p}_{t+s}(x)]^2$; this yields the lemma.
\end{proof}

\begin{proof}[Proof of Proposition \ref{derivative:estimate}]
	We will provide the details for the case that $k=2$
	and merely point out the key part that needs a small revision
	to extend the proof to $k>2$.
	
	The proof is carried out in a few relatively direct steps,
	similar to the proof of \cite[Lemma 4.2]{CKNP_c};
	see also \cite[Theorem 6.4]{CKNP}.

	{\medskip\noindent\bf Step 1.}\
	\emph{Let $u_n$ denote the $n$th stage of the Picard iteration approximation of
	$u$. That is, $u_0(t\,,x) :=1$ and
	\begin{equation}\label{Picard}
		u_{n+1}(t\,,x) := 1 + \int_0^t\sum_{y\in\Z^d}
		\bm{p}_{t-s}(x-y)\Phi(u_n(s\,,y))\,\d B_s(y)
		\qquad[n\in\Z_+,t>0,x\in\Z^d].
	\end{equation}
	We claim that for all $n\in\N$,  $t>0$, $s\in(0\,,t)$, and $x,y\in\Z^d$,
	\[
		\E\left(| D_{s,y} u_{n+1}(t\,,x) |^2\right)\le
		[\bm{p}_{t-s}(x-y)]^2\bigg\{  { K\e^{L s}
		\sum_{\nu=0}^{n-1} 2^{\nu+1}\lip_\Phi^{2\nu} \frac{(t-s)^\nu}{\nu!}
		+ 2^{n+1}\lip_\Phi^{2n} \frac{(t-s)^{n}}{n!} | \Phi(1)|^2} \bigg\},
	\]
	where $K$ is a positive real that depends only on $\Phi$. Furthermore,
	\begin{equation}\label{Du_1}
		\E\left(| D_{s,y} u_1(t\,,x) |^2\right) = [\bm{p}_{t-s}(x-y)]^2|\Phi(1)|^2,
	\end{equation}
	for every $t>0$, $s\in(0\,,t)$, $x,y\in\Z^d$.\medskip
	}

	\noindent{\sc Proof of Step 1.}\
	We apply the properties of the divergence operator (see \cite[Prop.\ 1.3.8]{Nualart})
	in order to find from \eqref{Picard} that: (1)
	$D_{s,y} u_1(t\,,x) = \bm{p}_{t-s}(x-y)\Phi(1)$, which proves \eqref{Du_1};
	and (2)
	\begin{align}\label{derivative}
		D_{s, y}u_{n + 1}(t\,,x) = \bm{p}_{t - s}(x - y)\Phi(u_n(s\,,y))
		+ \int_s^t\sum_{z\in\Z^d}
		\bm{p}_{t - r}(x - z)D_{s, y}\Phi(u_n(r\,,z))\,\d B_r(z),
	\end{align}
	for every $n\in\N$, $(t\,,x)\in(0\,,\infty)\times\Z^d$, and $(s\,,y)\in(0\,,t)\times\Z^d$.
	Because $|\Phi(z)|\le|\Phi(0)|+\lip_\Phi|z|$ for all $z\in\R$, the preceding yields the following
	bounds:
	\begin{align}
		\label{E:SecDNorm}
		&\left\| D_{s,y} u_{n+1}(t\,,x) \right\|_2\\ \notag
		&\le \bm{p}_{t-s}(x-y)\left( |\Phi(0)|+\lip_\Phi\|u_n(s\,,y)\|_2\right)
			+ \bigg( \int_s^t \sum_{z\in\Z^d} [\bm{p}_{t-r}(x-z)]^2
			\| D_{s,y}\Phi(u_n(r\,,z))\|_2^2\, \d r \bigg)^{1/2}\\ \notag
		&\le \bm{p}_{t-s}(x-y)\left( |\Phi(0)|+\lip_\Phi\|u_n(s\,,y)\|_2\right)
			+ \lip_\Phi\bigg( \int_s^t \sum_{z\in\Z^d} [\bm{p}_{t-r}(x-z)]^2
			\| D_{s,y} u_n(r\,,z) \|_2^2\, \d r \bigg)^{1/2},
	\end{align}
	thanks to the chain rule of Malliavin calculus for Lipschitz-continuous
	functions (see Nualart \cite[Proposition 1.2.4]{Nualart}).
	In order to adapt the preceding the asserted bound for $\| D_{s, y}u_t(x)\|_k$
	when $k>2$, we apply the Burkholder-Davis-Gundy inequality instead, and obtain
	\[
		\left\| D_{s, y}u_t(x)\right\|_k
		\le  \bm{p}_{t-s}(x-y)\left( |\Phi(0)|+\lip_\Phi\|u_n(s\,,y)\|_2\right)
		+ A\bigg( \int_s^t \sum_{z\in\Z^d} [\bm{p}_{t-r}(x-z)]^2
		\| D_{s,y} u_n(r\,,z) \|_k^2\, \d r \bigg)^{1/2},
	\]
	where $A=A(k\,,\lip_{\Phi})$. We continue with the case $k=2$ from now on, but 
	point out that we obtain the general form of the proposition by keeping
	track of the effect of using the above modification.
	
	Recall  \eqref{moments} and let
	$K := (|\Phi(0)| + \sqrt{L}\,\lip_\Phi )^2$
	in order to find that
	\begin{align}\label{E2:IntIneq}
		\left\| D_{s,y} u_{n+1}(t\,,x) \right\|_2^2
		\le 2K\e^{Ls}[\bm{p}_{t-s}(x-y) ]^2+
		2\lip_\Phi^2\int_s^t\sum_{z\in\Z^d}[\bm{p}_{t-q}(x-z)]^2
		\|D_{s,y}u_n(q\,,z)\|_2^2\,\d q,
	\end{align}
	where we have used the elementary inequality $(a+b)^2\le2a^2+2b^2$, valid for every $a,b\in\R$.
	In particular, we may freeze the variables $s$ and $y$ in order to see that the following functions
	$g_1,g_2,\ldots,$ defined via
	\begin{equation}\label{g_n}
		g_n(r\,,x) := \| D_{s,y} u_n(s+r\,,x+y) \|_2^2\qquad[n\in\N,
		r>0,x\in\Z^d],
	\end{equation}
	satisfy
	\begin{equation}\label{g_1:UB}
		g_1(r\,,x) \le [\Phi(1)\bm{p}_r(x)]^2,
	\end{equation}
	and
	\[
		g_{n+1}(r\,,x) \le 2K\e^{L s}[\bm{p}_r(x)]^2 +
		2\lip_\Phi^2\int_0^r\d q \sum_{z\in\Z^d} [\bm{p}_{r-q}(x-z)]^2 g_n(q\,,z),
	\]
	for every $n\in\N$, $r>0$, and $x\in\Z^d$. We may iterate this recursive inequality
	once in order to see that if $n\ge 2$ is an integer, $r>0$, and $x\in\Z^d$, then
	\begin{align*}
		g_{n+1}(r\,,x) \le &2K\e^{L s}[\bm{p}_r(x)]^2 +
			4K { \e^{L s}}
			\lip_\Phi^2\int_0^r\d q \sum_{z\in\Z^d} [\bm{p}_{r-q}(x-z)]^2 [\bm{p}_q(z)]^2\\
		&+ 4\lip_\Phi^4\int_0^r\d q_1\sum_{z_1\in\Z^d}[\bm{p}_{r-q_1}(x-z_1)]^2
			\int_0^{q_1}\d q_2\sum_{z_2\in\Z^d}[\bm{p}_{q_1-q_2}(z_1-z_2)]^2 g_{n-1}(q_2\,,z_2)\\
		&\hskip-1in\le 2K\e^{L s}[\bm{p}_r(x)]^2 +
			4K\e^{L s} \lip_\Phi^2 r[\bm{p}_r(x)]^2
			+ 4\lip_\Phi^4\int_0^r\d q_1
			\int_0^{q_1}\d q_2\sum_{z\in\Z^d}[\bm{p}_{r-q_2}(x-z)]^2 g_{n-1}(q_2\,,z),
	\end{align*}
	owing to sub-semigroup property of $\bm{p}^2$ [Lemma \ref{lem:key}].
	We may repeat once again to see that
	if $n\ge 3$ is an integer, $r>0$, and $x\in\Z^d$, then
	\begin{align*}
		g_{n+1}(r\,,x) \le & 2K\e^{L s}[\bm{p}_r(x)]^2 +
			4K\e^{L s} \lip_\Phi^2 r[\bm{p}_r(x)]^2\\
		& + 8K\e^{L s}\lip_\Phi^4\int_0^r\d q_1
			\int_0^{q_1}\d q_2\sum_{z\in\Z^d}[\bm{p}_{r-q_2}(x-z)]^2
			[\bm{p}_{q_2}(z)]^2\\
		&+ 16\lip_\Phi^6\int_0^r\d q_1
			\int_0^{q_1}\d q_2\sum_{z\in\Z^d}[\bm{p}_{r-q_2}(x-z)]^2
			\int_0^{q_2}\d q_3
			\sum_{z'\in\Z^d}[\bm{p}_{q_2-q_3}(z-z')]^2 g_{n-2}(q_3\,,z'),
	\end{align*}
%	Apply Lemma \ref{lem:key} once more to see that
%	\begin{align*}
%		g_{n+1}(r\,,x) &\le 2C(\beta)\e^{\beta s}[\bm{p}_r(x)]^2 +
%			4C(\beta)\e^{\beta s}
%			\lip_\Phi^2 r[\bm{p}_r(x)]^2\\
%		& + 8C(\beta)\e^{\beta s}\lip_\Phi^4\frac{r^2}{2}[\bm{p}_r(x)]^2\\
%		&+ 8\lip_\Phi^6\int_0^r\d q_1
%			\int_0^{q_1}\d q_2
%			\int_0^{q_2}\d q_3
%			\sum_{z\in\Z^d}[\bm{p}_{r-q_3}(x-z)]^2 g_{n-2}(q_3\,,z).
%	\end{align*}
	and so on. Continue this iteration process and deduce from \eqref{g_n} the asserted
	inequality for $\E(| D_{s,y} u_{n+1}(t\,,x) |^2)$
	after a change of variables $[s+t\leftrightarrow s+r$ and $x+y\leftrightarrow x]$,
	using the simple fact that $\sum_{w\in\Z^d}[\bm{p}_\tau(w)]^2\le1$
	for all $\tau\ge0$ in order
	to obtain the last term in the curly brackets.
	This and \eqref{g_1:UB} together prove
	the validity of Step 1.\qed\medskip

	\noindent{\bf Step 2.} \emph{$\sup_{n\in\Z_+}
	\E(\| D u_n(t\,,x)\|_{\mathcal{H}}^2)<\infty$
	for all $t\ge0$ and $x\in\Z^d$, where $\mathcal{H}= L^2(\R_+ \times \Z^d)$
	was defined in \S\ref{subsec:MC}.
	}\medskip

	\noindent{\sc Proof of Step 2.}\
	In accord with   \cite[Corollary 1.2.1]{Nualart}, $D_{s,y} u_n(t\,,x)=0$ when $s\ge t$. Therefore, Step 1 implies
	that $\E( \| Du_n(t\,,x)\|_{\mathcal{H}}^2)
	\le \int_0^t c_n(s\,,t)\| \bm{p}_s\|_{\ell^2(\Z^d)}^2\,\d s$
	for all $(t\,,x)\in\R_+\times\Z^d$ and $n\in\N$, where $c_1\equiv |\Phi(1)|^2$ and
	\begin{equation}\label{c_n}
		c_{n+1}(s\,,t) = K\e^{L s}   {
		\sum_{\nu=0}^{n-1} 2^{\nu+1}\lip_\Phi^{2\nu} \frac{(t-s)^\nu}{\nu!}
		+ 2^{n+1}\lip_\Phi^{2n} \frac{(t-s)^{n}}{n!} | \Phi(1)|^2}.
	\end{equation}
	Step 2 is a consequence of the above and  the elementary fact that
	$\sup_{n\in\N}c_n(s\,,t)<\infty$.\qed\\

	Finally, we complete the proof of Proposition \ref{derivative:estimate}
	in a third, and final, step.\medskip

	\noindent{\bf Step 3.}\
	\emph{ $u_t(x)\in \mathbb{D}^{1,k}$ for every $k\ge2$ and
	$(t\,,x)\in\R_+\times\Z^d$,
	and \eqref{Du(t,x)} holds for the parameter dependencies of
	Proposition \ref{derivative:estimate}.}
	\medskip

	\noindent{\sc Proof of Step 3.}\
	Again we consider only the case $k=2$; the general case is proved similarly.
	Since $u_0\equiv1$, Step 3 has content only when $t>0$. With this comment in mind,
	let us choose and fix $t>0$ and $x\in\Z^d$. General theory \cite{KhK15} ensures
	that $\lim_{n\to\infty}u_n(t\,,x) = u_t(x)$ in $L^2(\Omega)$ for every
	$t\ge0$ and $x\in\Z^d$. Therefore,
	the closeability properties of the Malliavin derivative (see Nualart \cite[Lemma 1.2.3]{Nualart})
	and Step 2 together imply that
	$Du_n(t\,,x)\to Du(t\,,x)$, as $n\to\infty$, in the weak topology of
	{$L^2(\Omega\, ; L^2(\R_+\times \Z^d))$}; and moreover, that $u_t(x)\in\mathbb{D}^{1,2}$.
	Now, we apply Cantor's diagonalization in order to see
	that there exists an unbounded sequence $\{n(\ell)\}_{\ell=1}^\infty$
	of positive integers such that for every $y \in \Z^d$,
	$D_{\bullet, y}u_{n(\ell)}(t\,,x) \to D_{\bullet, y}u(t\,,x)$,
	as $\ell\to\infty$, in the weak topology of
	$L^2(\Omega\, ; L^2(\R_+))$.
	We next use a bounded and smooth approximation $\{\psi_\varepsilon\}_{\varepsilon>0}$
	to the identity in $\R_+$,  and apply Fatou's lemma and
	the self-duality of $L^2$ spaces in order
	to find that
	\begin{equation}\label{ED}\begin{split}
		\|D_{s,y}u_t(x) \|_2 & \le  \liminf_{\varepsilon\downarrow 0}
			\left \| \int_0^s D_{s'\!,y} u_t(x)
			\psi_\varepsilon(s-s')\, \d s' \right\|_2\\
		& =  \liminf_{\varepsilon\downarrow 0}
			\sup_{\|G \|_2\le 1}
			\left| \int_0^s  \E\left[ G D_{s'\!,y} u_t(x) \right]
			\psi_\varepsilon(s-s')\, \d s' \right|,
	\end{split}\end{equation}
	for all $y \in \Z^d$ and almost every $s \in (0\,,t)$.
	Choose and fix a random variable $G\in L^{2}(\Omega)$ such that
	$\E(|G|^{2})\le 1$. For all $y \in \Z^d$,
	$D_{\bullet, y}u_{n(\ell)}(t\,,x) \to D_{\bullet, y}u_t(x)$,
	as $\ell\to\infty$, in the weak topology of
	$L^2(\Omega\, ; L^2(\R_+))$.
	Thus, we find that for all $y \in \Z^d$ and almost all $s \in (0\,, t)$,
	\begin{align*}
		\left| \int_0^s  \E \left[ G D_{s'\!,y} u_t(x) \right]
			\psi_\varepsilon(s-s')\, \d s' \right|
			&= \lim _{\ell\rightarrow \infty} \left|
			\int_0^s  \E\left[ G D_{s'\!,y} u_{n(\ell)}(t\,,x) \right]
			\psi_\varepsilon(s-s')\, \d s'   \right| \\
		&\le\limsup_{\ell\to\infty}
			\int_0^s  \left\| D_{s',y} u_{n(\ell)}(t\,,x) \right\|_2
			\psi_\varepsilon(s-s')\, \d s'\\
		&\le \lim_{\ell\to\infty} \sqrt{c_{n(\ell)}(s\,,t)}
			\int_0^s  \bm{p}_{t - s'}(x - y)
			\psi_\varepsilon(s-s')\, \d s',
	\end{align*}
	owing to Step 1,
	where $c_n$ was defined in \eqref{c_n}. Letting $\varepsilon\to 0$
	to deduce the result from \eqref{ED}, as well as
	the boundedness and the continuity of $s\mapsto \bm{p}_{t-s}(x-y)$ for every $t>0$ and $x,y\in\Z^d$.
\end{proof}

\subsection{Proof of Theorem \ref{CLT:SHE}}

Let us make a small observation before
we begin the proof of Theorem \ref{CLT:SHE}:
Thanks to the Poincar\'e inequality \eqref{Poincare:Cov}
and the chain rule of Malliavin calculus \cite[Proposition 1.2.4]{Nualart},
\[
|\Cov[g(u_t(0)) \,, g(u_t(x))]| \le \lip_g^2\int_0^t\sum_{z \in \Z^d}
\| D_{s,z } u_t(0)  \|_2\| D_{s,z} u_t(x) \|_2\,\d s.
\]
Therefore, Proposition \ref{derivative:estimate}  yields
\begin{equation}\label{SumCov}
	\sum_{x\in\Z^d}
	\left|\Cov\left[g(u_t(0)) \,, g(u_t(x))\right]\right|
	\le A\int_0^t
	\sum_{x,z\in\Z^d} \bm{p}_{t-s}(-z)\bm{p}_{t-s}(x-z)\,\d s<\infty,
\end{equation}
for a real number $A=A(K\,,\lip_g\,,t\,,\lip_\Phi\,,L).$
{We now proceed to the proof of Theorem \ref{CLT:SHE}.}

\begin{proof}[Proof of Theorem \ref{CLT:SHE}]
	Choose and fix some $t>0$ throughout,
	and define
	\[
		\zeta_k := g(u_t(k)) - \E[g(u_t(0))]\qquad
		\text{for every $k \in \Z^d$.}
	\]
	By Proposition \ref{stationarity}
	and \eqref{moments}, $\{\zeta_k\}_{k \in\Z^d}$ is stationary,
	$\E[\zeta_0] = 0$, and $\Var(\zeta_0)< \infty$. Furthermore, \eqref{SumCov}
	assures us that
	$\sigma_{g,t}^2=\sum_{k\in\Z^d}
	\Cov(\zeta_0\,,\zeta_k)= \sum_{k\in\Z^d}
	\Cov(g(u_t(0))\,,g(u_t(k)))$
	is an absolutely convergent sum.
	We verify uniform integrability \eqref{E:UI} next.
	
	For every $\varphi \in \mathscr{C}$ define, following \eqref{S_n},
	\begin{align} \label{E:S2}
		S_n(\varphi) &= 
			{ {n^{-d/2}}}\sum_{k\in\Z^d}\left\{
			g(u_t(k))- \E[g(u_t(0))]\right\}\varphi(k/n)\\
		&= n^{-d/2} \sum_{k\in\Z^d}\bigg( \sum_{y\in\Z^d}
			\int_0^t \E\left[ D_{s,y}[g(u_t(k))] \mid \mathcal{F}_s\right]
			\d B_s(y)\bigg)\varphi(k/n),
	\end{align}
	where $\mathcal{F}_s:=$ the $\sigma$-algebra generated by
	$\{B_r(y);\, y\in\Z^d, r\in[0\,,s]\}$,
	and we have used the Clark--Ocone formula in the last line. We apply Minkowski's inequality
	and the Burkholder-Davis-Gundy inequality in order to see from the above that
	for all $p\ge2$ there exists $c_p>0$ such that for every $n\in\N$,
	\begin{align*}
		\| S_n(\varphi)\|_p^2 &\le \frac{c_p}{n^d}\sum_{y\in\Z^d}\sum_{k,k'\in\Z^d}
			|\varphi(k/n)\varphi(k'/n)|\int_0^t
			\left\| \E\left[ D_{s,y}[g(u_t(k))] \mid \mathcal{F}_s\right]
			\E\left[ D_{s,y}[g(u_t(k'))] \mid \mathcal{F}_s\right] 
			\right\|_{p/2}\,\d s\\
		&\le \frac{c_p}{n^d}\sum_{y\in\Z^d}\sum_{k,k'\in\Z^d}
			|\varphi(k/n)\varphi(k'/n)|\int_0^t
			\left\| D_{s,y}[g(u_t(k))]\right\|_p
			\left\| D_{s,y}[g(u_t(k'))] \right\|_p\,\d s,
	\end{align*}
	the last line valid thanks 
	to the Cauchy-Schwarz inequality and Jensen's inequality for conditional expectations.
	Thus, the chain rule of Malliavin derivative (see \cite[Proposition 1.2.4]{Nualart}) yields
	\[
		\| S_n(\varphi)\|_p^2 \le \frac{c_p\lip_g^2}{n^d}\sum_{y\in\Z^d}\sum_{k,k'\in\Z^d}
		|\varphi(k/n)\varphi(k'/n)|\int_0^t
		\left\| D_{s,y}u_t(k) \right\|_p
		\left\| D_{s,y}u_t(k')\right\|_p\,\d s.
	\]
	Proposition \ref{derivative:estimate} can now be used to deduce that,
	uniformly for all $n\in\N$,
	\begin{align*}
		\| S_n(\varphi)\|_p^2 &\le \frac{\text{const}}{n^d}\cdot\sum_{y\in\Z^d}\sum_{k,k'\in\Z^d}
			|\varphi(k/n)\varphi(k'/n)|\int_0^t
			\bm{p}_{t-s}(k-y)\bm{p}_{t-s}(k'-y)\,\d s\\
		&=\frac{\text{const}}{n^d}\cdot\sum_{k,k'\in\Z^d}
			|\varphi(k/n)\varphi(k'/n)|\int_0^t
			\bm{p}_{2(t-s)}(k-k')\,\d s
			\hskip.5in\text{[semigroup property]}\\
		&\leq \frac{\text{const}}{n^d}\cdot\sum_{k\in\Z^d}
			|\varphi(k/n)|^2\int_0^t
			\sum_{j\in\Z^d}\bm{p}_{2(t-s)}(j)\,\d s
			\hskip.5in\text{[Cauchy-Schwarz inequality]}.
	\end{align*}
	Since $\bm{p}_{2(t-s)}(j)$ sums up to $1$, the integral is equal to $t$, whence
	$\sup_{n\in\N}\|S_n(\varphi)\|_p<\infty$
	for every $\varphi\in\mathscr{C}$. Because $p>2$,
	the uniform integrability condition \eqref{E:UI} follows.

	In light of \eqref{SumCov} and  Theorem \ref{th:CLT}
	it remains to prove that $\{S_n(\varphi);\, \varphi\in\mathscr{C}\}$
	satisfies the asymptotic independence condition (\textbf{AI}).
	First, note that for all $z \in \Z^d$ and almost every $s\in (0\,, t)$,
	\[
		D_{s, z}S_n(\varphi) =
		n^{-d/2}\sum_{k\in\Z^d}D_{s, z}g(u_t(k))\varphi(k/n)
		= n^{-d/2}\sum_{k\in\Z^d}g'(u_t(k))D_{s, z}u_t(k)\varphi(k/n)
		\quad\text{ a.s.}
	\]
	Therefore, we once again
	envoke the chain rule of Malliavin derivative (see Nualart \cite[Proposition 1.2.4]{Nualart})
	in order to see that for all 
	$p\ge 2$, $n\in\N$, and $z \in \Z^d$, and for almost every $s\in (0\,, t)$,
	\begin{equation}\label{DS}
		\|D_{s, z}S_n(\varphi)\|_p
		\leq  \frac{A_{p,t}}{n^{d/2}}
		\sum_{k\in\Z^d}\bm{p}_{t -s}(k -z) |\varphi(k/n)|,
	\end{equation}
	for a number $A_{p,t}>0$, where the last inequality follows from Proposition \ref{derivative:estimate}.
	
	Choose and fix $a,b\in\R$, $\delta >0$, and $\varphi_1, \varphi_2 \in \mathscr{U}$ such that
	\begin{equation}\label{separate}
		{\rm sep}({\rm supp}[\varphi_1]\,,{\rm supp}[\varphi_2]) \geq \delta.
	\end{equation}
	Our remaining goal is to prove that
	\[
		\mathcal{C}_n := \left|\E\left[\e^{iaS_n(\varphi_1) + ibS_n(\varphi_2)}\right] -
		\E\left[\e^{iaS_n(\varphi_1)}\right]\E\left[\e^{ibS_n(\varphi_2)}\right]\right|
		\to { 0} \quad\text{as $n\to\infty$}.
	\]
	Since $\mathcal{C}_n= |\Cov(\e^{iaS_n(\varphi_1)}\,, \e^{-ibS_n(\varphi_2)})|,$
	\eqref{Poincare:Cov} and \eqref{DS}
	together imply that
	\begin{align*}
		\mathcal{C}_n&\leq |ab| \sum_{z \in \Z^d}\int_0^t
			\|D_{s, z}S_n(\varphi_1)\|_2 \|D_{s, z}S_n(\varphi_2)\|_2\, \d s  \\
		&\leq \frac{\text{const}}{n^d}
			\sum_{k, m, z \in \Z^d}
			|\varphi_1(k/n) \varphi_2(m/n)|\int_0^t
			\bm{p}_s(k -z)\bm{p}_s(m -z)\d s,
	\end{align*}
	uniformly in $n\in\N$.
	Observe that $\sum_{z\in\Z^d}\bm{p}_s(k-z)\bm{p}_s(m-z)=\P\{X_s-X_s'=k-m\}$
	where $X$ and $X'$ are i.i.d.\ copies of a random walk with generator $L$. Thus,
	we can re-index the sums to find that
	\begin{equation}\label{cov12}
		\mathcal{C}_n
		\le \text{const}\cdot
		\sum_{\ell\in \Z^d}\int_0^t \P\{X_s-X_s'=\ell\}\,\d s\
		\frac{1}{n^d}\sum_{m \in\Z^d}\left|
		\varphi_1 \left( \frac{m + \ell}{n}\right)\varphi_2\left( \frac{m}{n}\right)\right|.
	\end{equation}
	The Cauchy-Schwarz inequality and \eqref{Riemann} [with $j = 0$] together
	imply that the final quantity
	$n^{-d}\sum_{m\in\Z^d}|\varphi_1((m+\ell)/n)\varphi_2(m/n)|$
	in \eqref{cov12} is bounded uniformly over
	all $n\in\N$ and $m\in\Z^d$.
	Therefore, an appeal to the dominated convergence theorem assures us that
	\[
		\lim_{n\to\infty}\sum_{\ell\in \Z^d}\int_0^t \P\{X_s-X_s'=\ell\}\,\d s\cdot
		\frac{1}{n^d}\sum_{m \in\Z^d}
		\left| \varphi_1\left( \frac{m + \ell}{n}\right)\varphi_2\left( \frac{m}{n}\right)
		\right|
		= t\int_{\R^d}|\varphi_1(y)\varphi_2(y)|\,\d y  = 0,
	\]
	owing to \eqref{Riemann}
	and \eqref{separate}. This and \eqref{cov12} together imply
	that $\lim_{n\to\infty}\mathcal{C}_n=0$, and complete the proof of the theorem.
\end{proof}
{
\begin{remark}
An anonymous referee informs us a functional version of our Theorem \ref{CLT:SHE}. Consider $\varphi= \mathbf{1}_{Q(r)}$ (see the definition of $\mathbf{1}_{Q(r)}$ in \S\ref{sec2.3}). Then, as a process in $r\in [0, 1]$, the left-hand side of \eqref{CLT} converges in distribution to Brownian motion in the space $C[0, 1]$. This is guaranteed by \cite[Theorem 19.2]{Billingsley} since the moment estimate and asymptotic independence established in the proof of Theorem  \ref{CLT:SHE} verify the conditions of \cite[Theorem 19.2]{Billingsley}. Meanwhile, the same anonymous referee also informs us that when $d=1$, our Theorem \ref{th:CLT} can be deduced from  Theorem 4.5
of Jakubowski \cite{Jakubowski91}. 
\end{remark}
}

\noindent \textbf{Acknowledgement.}\
We thank an anonymous referee for patiently pointing out an oversight in an earlier version of this paper. 
This paper has benefitted from comments and suggestions by two anonymous referees.
%One of the referees
%has pointed out that, when $d=1$, our Theorem \ref{th:CLT} has significant overlap with Theorem 4.5
%of Jakubowski \cite{Jakubowski91}; whereas the latter result does not require short-range 
%dependence, it does appear to assume a stronger asymptotic independence condition than is
%used in our Theorem \ref{th:CLT}. The proofs of the two results are quite different.


\begin{thebibliography}{999}\small
%\bibitem{BBKT}
%	Barlow, Martin T., Richard F. Bass, Takashi Kumagai, and
%	Alexander Teplyaev (2010). Uniqueness of Brownian motion on Sierpinski carpets.
%	{\it J. European Math.\ Soc.}\
%	{\bf 12} 655--701.
\bibitem{Bertoin}
	Bertoin, J. (1996).
	{\it L\'evy Processes.}
	Cambridge University Press, Cambridge, UK.
\bibitem{Billingsley}	
Billingsley, P. (1968)
{\it Convergence of probability measures}.
 John Wiley \& Sons, Inc., New York-London-Sydney.
\bibitem{Bradley}
	Bradley, R. C. (2007).
	{\it Introduction to Strong Mixing Conditions, Vol.\ 1.}
	Kendrick Press, Heber City, Utah.
\bibitem{Bradley_PS}
	Bradley, R. C. (2005).
	Basic properties of strong mixing conditions. A survey and some open questions.
	Update of, and a supplement to, the 1986 original.
	{\it Probab.\ Surv.}\ {\bf 2} 107--144.
\bibitem{CM}
	Carmona, R. A. and  Molchanov, S. A. (1994).
	Parabolic Anderson Problem and Intermittency,
	\emph{Mem.\ Amer.\ Math.\ Soc.}, Providence, RI.
%\bibitem{ChenDalang15heat}
%	Chen, Le and Robert C. Dalangi (2015).
%	Moments and growth indices for nonlinear stochastic heat equation with rough initial conditions.
%	{\em Ann.\ Probab.}  43, no. 6, 3006--3051.
%\bibitem{CH19Comparison}
  %      Chen, Le and Jingyu Huang (2019).
  %      \newblock Comparison principle for stochastic heat equation on $\R^d$.
    %    \newblock {\it Ann.\ Probab.}\ {\bf 47}{\it(2)} 989--1035.
\bibitem{CKNP}
	Chen, L., Khoshnevisan, D., Nualart, D., and Pu, F. (2019).
	Spatial ergodicity for SPDEs via Poincar\'e-type inequalities. Preprint available
	at \url{https://arxiv.org/abs/1907.11553}.
\bibitem{CKNP_b}
	Chen, L., Khoshnevisan, D., Nualart, D., and Pu, F. (2019).
	Poincar\'e inequality, and central limit theorems for parabolic stochastic partial
	differential equations. Preprint available at
	\url{https://arxiv.org/abs/1912.01482}.
\bibitem{CKNP_c}
	Chen, L., Khoshnevisan, D., Nualart, D.,  and Pu, F. (2020).
	Spatial ergodicity and central limit theorem for parabolic Anderson
	model with delta initial condition.
	Preprint available at \url{http://arxivorg/abs/2005.10417}.
%\bibitem{ChungFuchs}
%	Chung, K. L. and W. H. J. Fuchs (1951).
%	On the distribution of values of sums of random variables,
%	{\it Mem.\ Amer.\ Math.\ Soc.}\  no.\ 6, 12pp.
%\bibitem{Denker}
%	Denker, Manfred (1986).
%	Uniform integrability and the central limit theorem for
%	strongly mixing processes. {\it Dependence in Probability and Statistics (Oberwolfach, 1985)}, 369--289.
%	{\it Prog.\ Probab.\ Statist.}\ {\bf 11}. Birkh\"user, Boston, MA.
%\bibitem{CK20}
%	Chen, Le and Kunwoo Kim (2020).
 %       Stochastic comparisons for stochastic heat equation.
%	Preprint available at \url{http://arxivorg/abs/1912.05350}.
\bibitem{Deuschel}
	Deuschel, J.-D. (1988).
	Central limit theorem for an infinite lattice system of interacting diffusion processes.
	 {\it Ann.\ Probab.}\ {\bf 16}{\it (2)} 700--716.
\bibitem{EsaryProschanWalkup}
	Esary, J. D.,  Proschan, F., and  Walkup, D. W. (1967).
	Association of random variables, with applications.
	{\it Ann.\ Math.\ Statist.}\ {\bf 38} 1466--1474.
\bibitem{Federer}
	Federer, H. (1969).
	{\it Geometric Measure Theory}.
	Die Grundlehren der mathematischen Wissenschaften
	{\bf 153} Springer-Verlag, Berlin-Heidelberg-New York.
\bibitem{Ibragimov}
	Ibragimov, I. A. (1962).
	Some limit theorems for stationary processes.
	{\it Teor. Verojatnost.\ i Primenen.}\ {\bf 7} 361--392.
%\bibitem{HuangNualartViitasaari2018}
%	Huang, Jingyu, David Nualart, and Lauri Viitasaari (2018).
%	A central limit theorem for the stochastic heat equation.
%	Preprint available at \url{https://arxiv.org/abs/1810.09492}.
\bibitem{Jakubowski91}
	Jakubowski, A. (1991).
	{\it Asymptotic Independent Representations for Sums and Order Statistics of Stationary Sequences}.
	Available for download at
	\url{http://www-users.mat.uni.torun.pl/~adjakubo/hab.pdf}.
\bibitem{KS}
	Karatzas, I. and Steven E. S. (1991).
	{\it Brownian Motion and Stochastic Calculus.}
	Second edition.
	Graduate Texts in Mathematics, \textbf{113}. Springer-Verlag, New York
\bibitem{KhK15}
	Khoshnevisan, D.  and  Kim, K. (2015).
	Nonlinear noise excitation of intermittent stochastic PDEs and the topology of LCA groups.
	{\it Ann.\ Probab.}\ {\bf 43}{\it(4)} 1944--1991.
\bibitem{Lahiri}
	Lahiri, S. N. (2003).
	A necessary and sufficient condition for asymptotic independence of discrete Fourier transforms
	under short- and long-range dependence.
	{\it Ann.\ Statist.}\ {\bf 31}{\it (2)} 613--641.
\bibitem{MerlevedePeligradUtev}
	Merlev\`ede, F., Peligrad, M. and  Utev, S. (2006).
	Recent advances in invariance principles for stationary sequences.
	{\it Probab.\ Surv.}\ {\bf 3} 1--36.
\bibitem{NewmanWright}
	Newman, C. M. and Wright, A. L.  (1981).
	An invariance principle for certain dependent sequences.
	{\it Ann.\ Probab.}\ {\bf 9}{\it (9)} 361--371.
\bibitem{Nualart}
	Nualart, D. (2006).
	{\it  The Malliavin Calculus and Related Topics}. Springer, New York.
%\bibitem{NN}
 %      Nualart, David and Nualart, Eul\`alia  (2018).
 %      {\it An Introduction to Malliavin Calculus}.
 %      Cambridge University Press, Cambridge, UK.
\bibitem{Rosenblatt}
	Rosenblatt, M. (1956).
	Central limit theorems for stationary processes.
	{\it Proc.\ Sixth Berkeley Symp.\ Probab.\  Statist.}\
	{\bf 2} 551--561. University of California Press, Los Angeles, 1972.
\bibitem{SS}
	Shiga, T. and Shimizu, A. (1980).
	{Infinite-dimensional stochastic differential equations and their applications}.
	{\it J. Math.\ Kyoto Univ.}\
	{\bf20}{\it 3} 395--416.
\end{thebibliography}
\end{document}